\newtheorem{example}{Example}
\title[Limit Theorems for Monochromatic Stars]{Limit Theorems for Monochromatic Stars}
\author[Bhattacharya]{Bhaswar B. Bhattacharya}
\address{Department of Statistics, University of Pennsylvania, Philadelphia, USA,
{\tt bhaswar@wharton.upenn.edu}}
\author[Mukherjee]{Sumit Mukherjee\textsuperscript{*}}\thanks{\textsuperscript{*}Research partially supported by NSF grant DMS-1712037}
\address{Department of Statistics, Columbia University, New York, USA, {\tt  sm3949@columbia.edu}}
\begin{document}

\begin{abstract} Let $T(K_{1, r}, G_n)$ be the number of monochromatic copies of the $r$-star $K_{1, r}$ in a uniformly random coloring of the vertices of the graph $G_n$. In this paper we  provide a complete characterization of the limiting distribution of $T(K_{1, r}, G_n)$, in the regime where $\E(T(K_{1, r}, G_n))$ is bounded, for any growing sequence of graphs $G_n$. The asymptotic distribution is a sum of mutually independent components, each term of which is a polynomial of a single Poisson random variable of degree at most $r$. Conversely, any limiting distribution of $T(K_{1, r}, G_n)$ has a representation of this form.  Examples and connections to the birthday problem are discussed. 
\end{abstract}

\subjclass[2010]{05C15, 60C05,  60F05, 05D99}
\keywords{Combinatorial probability, Extremal combinatorics, Graph coloring, Limit theorems}


\maketitle

\section{Introduction}

Let $G_n$ be a simple labelled undirected graph with vertex set $V(G_n):=\{1,2,\cdots,|V(G_n)|\}$, edge set $E(G_n)$, and adjacency matrix $A(G_n)=\{a_{ij}(G_n),i,j\in  V(G_n)\}$. 
 In a {\it uniformly random $c_n$-coloring of $G_n$}, the vertices of $G_n$ are colored with $c_n$ colors as follows:  
\begin{equation}\P(v\in V(G_n) \text{ is colored with color } a\in \{1, 2, \ldots, c_n\})=\frac{1}{c_n},
\label{eq:uniform}
\end{equation}
independent from the other vertices. An edge $(a, b) \in E(G_n)$ is said to be {\it monochromatic} if $X_a=X_b$, where $X_v$ denotes the color of the vertex $v \in V(G_n)$ in a uniformly random $c_n$-coloring of $G_n$. Denote by  
\begin{align}\label{eq:me}
T(K_2, G_n)=\sum_{1\le u < v\le |V(G_n)|} a_{uv}(G_n) \bm 1\{X_u=X_v\},
\end{align} 
the number of monochromatic edges in $G_n$.

The statistic \eqref{eq:me} arises in several contexts, for example, as the Hamiltonian of the Ising/Potts models on $G_n$ \cite{bmpotts}, in non-parametric two-sample tests \cite{fr}, and the discrete logarithm problem \cite{ghdl}. Moreover, the asymptotics of $T(K_2, G_n)$ is often useful in the study of coincidences \cite{diaconismosteller} as a generalization of the birthday paradox \cite{barbourholstjanson,dasguptasurvey,diaconisholmes,diaconismosteller}: If $G_n$ is a friendship-network graph colored uniformly with $c_n=365$ colors (corresponding to birthdays), then two friends will have the same birthday whenever the corresponding edge in the graph $G_n$ is monochromatic.\footnote{When the underlying graph $G_n=K_n$ is the complete graph $K_n$ on $n$ vertices, this reduces to the classical birthday problem.}  Therefore, $\P(T(K_{2}, G_n)>0)$ is the probability that there are two friends with the same birthday. Note that $\P(T(K_{2}, G_n)>0)=1-\P(T(K_{2}, G_n)=0)=1-\chi_{G_n}(c_n)/c_n^{|V(G_n)|}$, where $\chi_{G_n}(c_n)$ counts the number of proper colorings of $G_n$ using $c_n$ colors. The function $\chi_{G_n}$ is known as the {\it chromatic polynomial} of $G_n$, and is a central object in graph theory \cite{chromaticbook,toft_unsolved,stanley}.

It is well-known that the limiting distribution of $T(K_{2}, G_n)$,  exhibits a universality, that is, $T(K_2, G_n)\dto \dPois(\lambda)$, whenever $\E(T(K_2, G_n))=\frac{|E(G_n)|}{c_n} \rightarrow \lambda$, {\it for any graph sequence $G_n$}.  This was shown by Barbour et al. \cite[Theorem 5.G]{barbourholstjanson}, using the Stein's method for Poisson approximation, for any sequence of deterministic graphs. Recently, Bhattacharya et al. \cite[Theorem 1.1]{BDM} gave a new proof of this result based on the method of moments, which illustrates interesting connections to extremal combinatorics.

For a general graph $H$, define $T(H, G_n)$ to be the number of monochromatic copies of $H$ in $G_n$, where the vertices of $G_n$ are colored uniformly at random with $c_n$ colors as in \eqref{eq:uniform}.   Conditions under which $T(H, G_n)$ is asymptotically Poisson are easy to derive using Stein's method based on dependency graphs \cite{birthdayexchangeability,CDM}. However, the class of possible limiting distributions of $T(H, G_n)$, for a general graph $H$ in the regime where $\E (T(H, G_n))=O(1)$, can be extremely diverse (including mixture and polynomials in Poissons \cite{BDM}), and there is no natural universality, as in the case of edges. Recently, Bhattacharya et al. \cite{BMM} proved the following second-moment phenomenon for the asymptotic Poisson distribution of $T(H,G_n)$, for any connected graph $H$: $T(H,G_n)$ converges to $\dPois(\lambda)$ whenever $\E T(H,G_n) \rightarrow \lambda$ and $\Var T(H, G_n) \rightarrow \lambda$. Moreover, for any graph $H$, $T(H,G_n)$ converges to linear combination of independent Poisson variables, when $G_n$ is a converging sequence of dense graphs \cite{BMdense}. 

However, there is no description of the set of possible limits of $T(H, G_n)$, other than the case of monochromatic edges ($H=K_2$) or dense graphs $G_n$ (where the limits are Poisson or a linear combination of independent Poissons respectively). In this paper, we consider the case of the $r$-star ($H=K_{1, r}$). This arises as a generalization of the birthday problem, for example, with $r=2$ and a friendship network $G_n$, $T(K_{1, 2}, G_n)$ counts the number of triples with the same birthday where someone is friends with the other two. This is especially relevant when $G_n$ has a few influential nodes which have many friends (``superstar" vertices \cite{BSZ}), and we wish to count the number of triple birthday matches with a superstar. 

 In this paper we identity the set of all possible limiting distributions of $T(K_{1, r}, G_n)$, for {\it any graph sequence} $G_n$. We show that the asymptotic distribution of $T(K_{1, r}, G_n)$ is a sum of mutually independent components, each term of which is a polynomial of a single Poisson random variable of degree at most $r$, and, conversely, any limiting distribution of $T(K_{1, r}, G_n)$ has this form.

\subsection{Limiting Distribution for Monochromatic $r$-Stars}

 Let $G_n $ be a simple graph with vertex set $V(G_n)$ and edge set $E(G_n)$. For a fixed graph $H$, denote by $N(H, G_n)$ the number of isomorphic copies of $H$ in $G_n$. Note that $N(K_{1, r}, G_n)=\sum_{v \in V(G_n)} {d_v \choose r}$, where $d_v$ is the degree of the vertex $v \in V(G_n)$. 

 
Now, suppose $G_n$ is colored with $c_n$ colors as in \eqref{eq:uniform}. If $X_v$ denotes the color of vertex $v\in V(G_n)$, then the number of monochromatic copies of $K_{1, r}$ in $G_n$ is 
\begin{equation}
T(K_{1, r}, G_n):=\sum_{v=1}^{|V(G_n)|}\sum_{\bm u \in {V(G_n) \choose r}} a_v(\bm u, G_n) \bm 1\{X_v=X_{\bm u}\},
\label{eq:m2stars}
\end{equation} 
where 
\begin{itemize}
\item[--] ${V(G_n) \choose r}$ is the collection of $r$-element subsets of $G_n$;
\item[--] $a_v(\bm u, G_n)=\prod_{s=1}^r a_{vu_s}(G_n)$, for $v \in V(G_n)$ and $\bm u =\{u_1, u_2, \ldots, u_r\} \in {V(G_n) \choose r}$; 
\item[--] $\bm 1\{X_v=X_{\bm u}\}:=\bm 1\{X_v=X_{u_1}=\cdots=X_{u_r}\}$, for $v \in V(G_n)$ and $\bm u  \in {V(G_n) \choose r}$, as above. 
\end{itemize}
Note that $$\E(T(K_{1, r}, G_n))=\frac{1}{c_n^r}\sum_{v=1}^{|V(G_n)|}\sum_{\bm u \in {V(G_n) \choose r}} a_v(\bm u, G_n)=\frac{1}{c_n^r}N(K_{1, r}, G_n).$$ 
It is known that the limiting behavior of $T(K_{1, r}, G_n)$ is governed by its expectation: 

\begin{ppn}\label{ppn:bmm}\cite[Lemma 3.1]{BMM}
Let $\{G_n\}_{n\ge 1}$ be a sequence of deterministic graphs colored uniformly with $c_n$ colors as in \eqref{eq:uniform}. Then 
$$T(K_{1, r},G_n)\pto 
\left\{\begin{array}{ccc}
0  & \text{if}  &  \lim_{n\rightarrow\infty} \E(T(K_{1, r}, G_n))=0,\\
\infty  & \text{if}  & \lim_{n\rightarrow\infty} \E(T(K_{1, r}, G_n))=\infty.
\end{array}\right.
$$
\end{ppn} 

Therefore, the most interesting regime is where $\E(T(K_{1, r}, G_n))=\Theta(1)$,\footnote{For two non-negative sequences $(a_n)_{n \geq 1}$ and $(b_n)_{n \geq 1}$, $a_n=\Theta(b_n)$ means that there exist positive constants $C_1, C_2 $, such that $C_1 b_n \leq a_n \leq C_2 b_n$, for all $n$ large enough.} that is, $c_n\rightarrow \infty$ such that 
\begin{align}\label{eq:2starmean}
\E(T(K_{1, r}, G_n))=\frac{N(K_{1, r}, G_n)}{c_n^r}=\frac{1}{c_n^r}\sum_{v \in V(G_n)}{d_v \choose r}=\Theta(1). 
\end{align}


\begin{thm}\label{th:2star}Let $\{G_n\}_{n \geq 1}$ be a sequence of graphs colored uniformly with $c_n$ colors, as in \eqref{eq:uniform}. Assume $c_n \rightarrow \infty$ such that the following hold: 

\begin{itemize} 
\item[(1)] For every $k \in [1, r+1]$, there exists $\lambda_k \geq 0$ such that
\begin{equation}\label{eq:Finduced}
	\lim_{n \rightarrow \infty}  \frac{\sum_{F\in \sC_{r,k}} N_{\mathrm{ind}}(F,G_n)}{c_n^{r}} = \lambda_k ,
\end{equation}
where $N_{\mathrm{ind}}(F, G_n)$ is the number of induced copies of $F$ in $G_n$ and $\sC_{r,k} := \{F \supseteq K_{1, r}: |V(F)|=r+1 \text{ and } N(K_{1, r},F)=k\}$.

\item[(2)] Let $d_{(1)}\geq d_{(2)} \geq \ldots \geq d_{(|V(G_n)|)}$ be the degrees of the vertices in $G_n$ arranged in non-increasing order, such that 
\begin{align}\label{eq:degassumption}
\lim_{n\rightarrow \infty}\frac{d_{(v)}}{c_n}=\theta_v,
\end{align}
for each $v \in V(G_n)$ fixed. 
\end{itemize}

Then 
\begin{align}\label{eq:2star}
T(K_{1, r}, G_n) \rightarrow \sum_{v=1}^\infty {T_v \choose r}+\sum_{k=1}^{r+1} k Z_k,
\end{align}
where  the convergence is in distribution and in all moments, and 
\begin{itemize}
\item[--] $T_1, T_2, \ldots, $ are independent  $\dPois(\theta_1), \dPois(\theta_2), \ldots $, respectively;
\item[--] $Z_1, Z_2, \ldots, Z_{r+1}$ are independent  $\dPois(\lambda_1-\frac{1}{r!}\sum_{u=1}^\infty \theta_u^r), \dPois(\lambda_2), \ldots \dPois(\lambda_{r+1})$, respectively;
\item[--] the collections $\{T_k, k \geq 1\}$ and $\{Z_k, 1 \leq k \leq r+1\}$ are independent. 
\end{itemize}
Conversely, if $T(K_{1, r}, G_n)$ converges in distribution, then the limit is necessarily of the form 
as in the RHS of \eqref{eq:2star}, for some non-negative constants $\theta_1 \geq \theta_2\ge \cdots$, and $\{\lambda_k, 1 \leq k \leq r+1 \}$.
\end{thm}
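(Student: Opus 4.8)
The plan is to deduce the converse from the forward direction of the theorem by a compactness (subsequence) argument. Suppose $T(K_{1,r}, G_n) \dto W$ for a proper random variable $W$. The first step is to establish the a priori bound $\sup_n \E(T(K_{1,r}, G_n)) = \sup_n N(K_{1,r}, G_n)/c_n^r < \infty$. Indeed, if this failed, then some subsequence would have $\E(T(K_{1,r}, G_n)) \to \infty$, and Proposition \ref{ppn:bmm} would force $T(K_{1,r}, G_n) \pto \infty$ along that subsequence; this is incompatible with $T(K_{1,r}, G_n) \dto W$ for a proper $W$, since convergence in distribution passes to every subsequence. Hence the expectation stays bounded.

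Next I would extract the two families of structural parameters along a common subsequence. Since $\binom{d_{(1)}}{r} \leq \sum_v \binom{d_v}{r} = N(K_{1,r}, G_n) = O(c_n^r)$, the normalized top degrees $d_{(v)}/c_n$ are uniformly bounded (set $d_{(v)}=0$ when $v > |V(G_n)|$), so a diagonal extraction yields a subsequence along which $d_{(v)}/c_n \to \theta_v$ for every fixed $v$, with $\theta_1 \geq \theta_2 \geq \cdots \geq 0$ by monotonicity; this is exactly condition (2). For condition (1), I would use the combinatorial identity
\[
N(K_{1, r}, G_n) = \sum_{k=1}^{r+1} k \sum_{F \in \sC_{r,k}} N_{\mathrm{ind}}(F, G_n),
\]
which holds because every copy of $K_{1,r}$ in $G_n$ spans a unique $(r+1)$-vertex set $S$, and grouping these copies by the isomorphism type $F$ of the induced subgraph $G_n[S]$ contributes a factor $N(K_{1,r},F) = k$ for each of the $N_{\mathrm{ind}}(F, G_n)$ induced copies of $F \in \sC_{r,k}$. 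Since the left-hand side is $O(c_n^r)$ and every summand on the right is non-negative, each ratio $\sum_{F \in \sC_{r,k}} N_{\mathrm{ind}}(F, G_n)/c_n^r$ is bounded; passing to a further subsequence (only finitely many values of $k$ are involved) gives convergence to some $\lambda_k \geq 0$, which is condition (1).

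Along this subsequence both hypotheses of the forward direction now hold, so $T(K_{1,r}, G_n)$ converges in distribution to the variable on the right-hand side of \eqref{eq:2star} determined by the extracted parameters $\{\theta_v\}$ and $\{\lambda_k\}$; in particular the forward conclusion certifies that these parameters are feasible (e.g. $\lambda_1 - \frac{1}{r!}\sum_u \theta_u^r \geq 0$, being the rate of a genuine Poisson limit). Since the full sequence converges to $W$, so does this subsequence, and therefore $W$ agrees in distribution with that right-hand side, which is precisely the asserted form.

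The main obstacle is the a priori control ruling out escape of mass to infinity: everything downstream is routine diagonal extraction followed by an appeal to the already-proven forward direction, but it all rests on the contrapositive of Proposition \ref{ppn:bmm} to force boundedness of the mean, and on the identity above to guarantee that the induced-count limits $\lambda_k$ are finite. I would also attend to the minor bookkeeping that all of $d_{(v)}$ and the induced counts are well defined for large $n$, which does not affect the limits.
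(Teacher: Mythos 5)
There is a genuine gap: your proposal proves only the converse half of the theorem, while treating the forward implication (conditions (1) and (2) imply the limit \eqref{eq:2star}) as ``already proven.'' But the forward implication is part of the statement, and it is by far the harder part: it is where the paper does all of its real work, namely decomposing $G_n$ into the $\varepsilon$-big vertices and the rest (Section \ref{sec:pfoutline}), showing the remainder term vanishes (Lemmas \ref{lm:Geps} and \ref{lm:TS2}), proving asymptotic independence in moments of the contributions from $G_{n,\varepsilon}^+$ and $G_{n,\varepsilon}^-$ via the correlation inequality $\beta(H_1\bigcup H_2)\ge \beta(H_1)\beta(H_2)$ (Lemmas \ref{lm:beta_H}, \ref{lm:ind}, \ref{lm:ind2}), and identifying the two limit components $\sum_v {T_v \choose r}$ and $\sum_k k Z_k$ (Lemmas \ref{lm:ind2} and \ref{lm:triangleapprox}). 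None of these ideas appear in your proposal, so as a proof of the stated theorem it is incomplete: a subsequence-plus-compactness argument cannot by itself produce the distributional limit; it can only transport a limit theorem you already have.

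That said, the part you do prove is correct, and it is essentially identical to the paper's own converse argument: the paper likewise invokes Proposition \ref{ppn:bmm} to reduce to $N(K_{1,r},G_n)=O(c_n^r)$, extracts subsequential limits of $N_{\mathrm{ind}}(F,G_n)/c_n^r$ and of $d_{(v)}/c_n$ (via compactness of $[0,K]^{\N}$ in the product topology, where you use diagonal extraction --- the same thing), and then invokes the sufficiency direction along that subsequence. Two of your details are in fact slightly cleaner than the paper's: the bound ${d_{(1)} \choose r}\le N(K_{1,r},G_n)=O(c_n^r)$ gives $\Delta(G_n)=O(c_n)$ directly, without the paper's reduction to graphs with $\max\{d_u,d_v\}\ge r$ on every edge; and grouping the induced counts through the identity $N(K_{1,r},G_n)=\sum_{k=1}^{r+1}k\sum_{F\in\sC_{r,k}}N_{\mathrm{ind}}(F,G_n)$ (the paper's \eqref{eq:NHsum}) shows you only need convergence of the $r+1$ aggregated ratios rather than of each $N_{\mathrm{ind}}(F,G_n)/c_n^r$ individually, which is exactly the level of generality condition \eqref{eq:Finduced} is designed for. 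But to close the gap you would still have to supply the entire forward direction.
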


This result gives a complete characterization of the limiting distribution of $T(K_{1, r}, G_n)$, in the regime where $\E(T(K_{1, r}, G_n))=\Theta(1)$ (in fact, under the assumptions of the theorem $\E(T(K_{1, r}, G_n))\rightarrow \sum_{k=1}^{r+1} k \lambda_k$). Note that the  limit in \eqref{eq:2star} has two components: 
\begin{itemize}

\item[--] a {\it non-linear part}   $\sum_{v=1}^\infty {T_v \choose r}$ which corresponds to the number of monochromatic $K_{1, r}$ in $G_n$ with central vertex of ``high" degree, that is, the vertices of degree $\Theta(c_n)$; and  

\item[--] a {\it linear part} $\sum_{k=1}^{r+1} k Z_k$ which is the number of monochromatic $K_{1, r}$ from the ``low"  degree vertices, that is, degree $o(c_n)$;
\end{itemize}
and, perhaps interestingly, the linear and the non-linear parts are asymptotically independent. The proof is given in Section \ref{sec:2starpf}. It involves decomposing the graph based on the degree of the vertices, and then using moment comparisons, to establish independence and compute the limiting distribution.

\begin{remark} An easy sufficient condition for \eqref{eq:Finduced} is the convergence of $\frac{1}{c_n^{|V(F)|-1}} N_{\mathrm{ind}}(K_{1, r}, G_n)$ for {\it every} super-graph $F$ of $K_{1, r}$ with $|V(F)|=r+1$. However, condition \eqref{eq:Finduced} does not require the convergence for every such graph, and is applicable to more general examples, as described below: Define a sequence of graphs $G_n$ as follows:
\[   
G_n = 
\begin{cases}
\text{ disjoint union of } n \text{ isomorphic copies of the 3-star } K_{1, 3} &\quad\text{ if } ~n \text{ is odd}\\
\text{ disjoint union of } n \text{ isomorphic copies of the (3, 1)-tadpole }  \Delta_+ &\quad\text{ if } ~n \text{ is even,}\\ 
\end{cases}
\]
where the (3, 1)-tadpole is the graph obtained by joining a triangle and a single vertex with a bridge. Now, choosing $c_n = \lfloor n^{1/3} \rfloor$, gives $\E(T(K_{1, 3}, G_n))\rightarrow 1$. In this case, 
$$ \frac{\sum_{F\in \sC_{H,1}} N_{\mathrm{ind}}(F,G_n)}{c_n^3} = \frac{N_{\mathrm{ind}}(K_{1, 3},G_n)+N_{\mathrm{ind}}(\Delta_+, G_n)}{c_n^3} \rightarrow 1,$$ 
and $\frac{1}{c_n^3} \sum_{F\in \sC_{H,4}} N_{\mathrm{ind}}(F,G_n)=\frac{1}{c_n^3} \sum_{F\in \sC_{H,4}} N_{\mathrm{ind}}(F,G_n)=\frac{1}{c_n^3} \sum_{F\in \sC_{H,2}} N_{\mathrm{ind}}(F,G_n)=0$. Therefore, Theorem \ref{th:2star} implies that $T(K_{1, 3}, G_n) \dto \dPois(1)$ (which can also be directly verified, because, in this case, $T(K_{1, 3}, G_n)$ is a sum of independent $\dBer(\frac{1}{c_n^3})$ variables).  However, it is easy to see that individually both $\frac{1}{c_n^3} N_{\mathrm{ind}}(K_{1, 3}, G_n)$ and $\frac{1}{c_n^3} N_{\mathrm{ind}}(\Delta_+, G_n)$ are non-convergent.  \end{remark}

The limit in \eqref{eq:2star} simplifies when the graph $G_n$ has no vertices of high degree. The following corollary is a  consequence of Theorem \ref{th:2star}.

\begin{cor}\label{cor:2star}Let $\{G_n\}_{n \geq 1}$ be a sequence of deterministic graphs. Then the following are equivalent.
\begin{enumerate}
\item[(a)] Condition \eqref{eq:Finduced} and $\lim_{n\rightarrow\infty}\frac{\Delta(G_n)}{c_n}=0$, where $\Delta(G_n):=\max_{v\in V(G_n)} d_v$. 

\item[(b)] $T(K_{1, r}, G_n) \dto \sum_{k=1}^{r+1} k Z_k$, where $Z_1, \ldots, Z_{r+1}$ are independent  $\dPois(\lambda_1), \ldots \dPois(\lambda_{r+1})$, respectively.  
\end{enumerate}
\end{cor}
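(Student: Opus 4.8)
The plan is to derive both implications directly from Theorem \ref{th:2star} and its converse, the only genuinely new ingredient being an identifiability argument that rules out a non-linear contribution.

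For $(a)\Rightarrow(b)$, I would first observe that $\Delta(G_n)=d_{(1)}$ and $d_{(v)}\le d_{(1)}$ for every $v$, so the hypothesis $\Delta(G_n)/c_n\to0$ forces $\theta_v=\lim_n d_{(v)}/c_n=0$ for all fixed $v$; thus assumption (2) of Theorem \ref{th:2star} holds with every $\theta_v=0$, while assumption (1) is precisely \eqref{eq:Finduced}. Applying the theorem, each $T_v\sim\dPois(0)$ is identically $0$, so the non-linear part $\sum_{v\ge1}\binom{T_v}{r}$ vanishes (since $\binom{0}{r}=0$ for $r\ge1$), and $\tfrac1{r!}\sum_u\theta_u^r=0$ makes $Z_1\sim\dPois(\lambda_1)$. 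Hence the limit is $\sum_{k=1}^{r+1}kZ_k$ with $Z_k\sim\dPois(\lambda_k)$, which is exactly $(b)$.

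For $(b)\Rightarrow(a)$, the hypothesis gives that $T(K_{1,r},G_n)$ converges in distribution, so by the converse part of Theorem \ref{th:2star} condition \eqref{eq:Finduced} holds for some $\lambda_k\ge0$, the limits $\theta_v=\lim_n d_{(v)}/c_n$ exist, and the limiting law is $\sum_{v\ge1}\binom{T_v}{r}+\sum_{k=1}^{r+1}kZ_k$. This already secures the first half of $(a)$, namely \eqref{eq:Finduced}; since $\Delta(G_n)=d_{(1)}$, the remaining claim $\Delta(G_n)/c_n\to0$ is equivalent to $\theta_1=0$. Because $(b)$ identifies the limit with the purely linear law $\sum_{k=1}^{r+1}kZ_k'$, it remains to establish the identifiability statement: if $\sum_{v\ge1}\binom{T_v}{r}+\sum_k kZ_k\stackrel{d}{=}\sum_k kZ_k'$ with all summands mutually independent Poissons, then $\theta_v=0$ for every $v$.

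This identifiability is the main obstacle, and I would settle it (for $r\ge2$) through exponential moments. For $T\sim\dPois(\theta)$ with $\theta>0$, the generating function $g_\theta(s)=\E\,s^{\binom{T}{r}}=\sum_{m\ge0}e^{-\theta}\tfrac{\theta^m}{m!}s^{\binom{m}{r}}$ has radius of convergence $1$: since $\binom{m}{r}\sim m^r/r!$ grows polynomially of degree $r\ge2$ while $\log(m!)\sim m\log m$, the general term tends to $+\infty$ for every $s>1$, so $g_\theta(s)=\infty$ there. By nonnegativity and independence, the left-hand side then satisfies $\E\,s^{\mathrm{LHS}}\ge g_{\theta_1}(s)=\infty$ for $s>1$ whenever $\theta_1>0$, whereas the purely linear law has the entire generating function $\prod_k\exp(\lambda_k'(s^k-1))$, finite for all $s$. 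This contradiction yields $\theta_1=0$, hence $\theta_v=0$ for all $v$ as $\theta_1\ge\theta_2\ge\cdots$, and therefore $\Delta(G_n)/c_n\to0$, completing $(b)\Rightarrow(a)$. (The case $r=1$ is degenerate, since $\binom{T}{1}=T$ is itself Poisson and the non-linear part merges into the linear one; the substantive content of the identifiability dichotomy is $r\ge2$.)
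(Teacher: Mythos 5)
Your direction $(a)\Rightarrow(b)$ is correct and coincides with the paper's (it is immediate from Theorem \ref{th:2star} once all $\theta_v=0$). Your identifiability computation is also sound and is in fact the same device the paper uses but states tersely: for $r\ge 2$ and $T\sim\dPois(\theta)$ with $\theta>0$, the series $\E\, s^{{T \choose r}}$ diverges for every $s>1$ because ${m\choose r}\sim m^r/r!$ beats $\log m!\sim m\log m$, whereas $\sum_{k=1}^{r+1}kZ_k'$ has an everywhere-finite generating function; the paper simply says the linear law ``has finite exponential moment everywhere.'' Your caveat that $r=1$ is degenerate is also apt (for $r=1$ the equivalence genuinely fails, e.g.\ for stars, so $r\ge2$ is implicitly assumed).

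The gap is at the start of $(b)\Rightarrow(a)$: the converse part of Theorem \ref{th:2star} does not deliver what you attribute to it. As stated, it asserts only that the limit \emph{law} has the form of the RHS of \eqref{eq:2star} for \emph{some} constants; it says nothing about convergence of the graph-sequence quantities, i.e.\ it does not assert that $\sum_{F\in \sC_{r,k}} N_{\mathrm{ind}}(F,G_n)/c_n^{r}$ converges (which is exactly \eqref{eq:Finduced}) nor that $d_{(v)}/c_n$ converges. Those full-sequence convergences are precisely the content of $(a)$, so taking them from the converse is assuming what must be proved. The correct route, which is the paper's: $(b)$ together with Proposition \ref{ppn:bmm} gives $\E T(K_{1,r},G_n)=\Theta(1)$, so the normalized induced counts and degree ratios are bounded; from an arbitrary subsequence extract a further subsequence along which they all converge, to $\lambda_k'$ and $\theta_v'$ say; apply the \emph{forward} part of Theorem \ref{th:2star} along that subsequence to represent the (full-sequence) limit law as $\sum_{v}{T_v'\choose r}+\sum_k kZ_k'$; your exponential-moment argument then forces $\theta_v'=0$, and --- a step entirely missing from your write-up --- matching probability generating functions, $\prod_{k=1}^{r+1}e^{\lambda_k(s^k-1)}=\prod_{k=1}^{r+1}e^{\lambda_k'(s^k-1)}$ for $s\in(0,1)$, forces $\lambda_k'=\lambda_k$. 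Only after both identifications do all subsequential limits coincide, so that the full sequences converge, yielding \eqref{eq:Finduced} and $\Delta(G_n)/c_n\to0$ simultaneously. Without the subsequence/compactness framework and the $\lambda$-identifiability, your argument pins down $\theta_1=0$ only under hypotheses (existence of the limits in \eqref{eq:Finduced} and of the $\theta_v$) that you have not derived from $(b)$.
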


The proof of the corollary is given in Section \ref{sec:cor2starpf}. Applications of this corollary and Theorem \ref{th:2star} are discussed in Section \ref{sec:2starexample}. In Section \ref{sec:conclusion} we discuss open problems and directions for future research.

\section{Proofs of Theorem \ref{th:2star} and Corollary \ref{cor:2star}}
\label{sec:2starpf}

The proof of Theorem \ref{th:2star} has four main steps: 

\begin{enumerate}
\item[(1)] Decomposing $G_n$ into the ``high"-degree and ``low"-degree vertices, and showing that the resulting error term vanishes (Section \ref{sec:pfoutline}).

\item[(2)] Showing that the contributions from the ``high"-degree and ``low"-degree vertices are asymptotically independent in moments (Section \ref{sec:pfm1}). 

\item[(3)] Computing the limiting distribution of the number of monochromatic $r$-stars with central vertex at one of the ``high"-degree vertices, which gives the non-linear term in \eqref{eq:2star} (Section \ref{sec:pfm2}).

\item[(4)] Computing the limiting distribution of the number of monochromatic $r$-stars from the ``low"-degree vertices, which gives the linear combination of independent Poisson variables in \eqref{eq:2star} (Section \ref{sec:pfm3}).
\end{enumerate} 
The proof of Theorem \ref{th:2star} can be easily completed by combining the above steps (Section \ref{sec:pf2star}). The proof of Corollary \ref{cor:2star} is given in Section \ref{sec:cor2starpf}.

Before proceeding we recall some standard asymptotic notation. For two nonnegative sequences $(a_n)_{n\geq 1}$ and $(b_n)_{n\geq 1}$, $a_n \lesssim b_n$ means $a_n=O(b_n)$, and $a_n \sim b_n$ means $a_n = (1+o(1))b_n$. We will use subscripts in the above notation, for example, $O_\square(\cdot)$, $\lesssim_\square$ to denote that the hidden constants may depend on the subscripted parameters.

\subsection{Decomposing $G_n$} 
\label{sec:pfoutline}

To begin with, note that the number of $r$-stars in $G_n$ remains unchanged if all edges $(u,v)$ in $G_n$ such that $\max\{d_u,d_v\} \le r-1$  are dropped.  Hence, without loss of generality,  assume that $\max\{d_u,d_v\}\ge r$, for all edges $(u,v)\in G_n$. This ensures that $N(K_{1, r}, G_n)=\sum_{v\in V(G_n)} {d_v \choose r}$ has the same order as $\sum_{v\in V(G_n)} d_v^r$ as shown below:

\begin{obs}\label{obs:edges} If $\max\{d_u,d_v\}\ge r$, for all edges $(u,v)\in G_n$, then assumption \eqref{eq:2starmean} implies \begin{align}\label{eq:2starmean_k}
\sum_{v\in V(G_n)}d_v^r=\Theta(c_n^r).
\end{align}
\end{obs}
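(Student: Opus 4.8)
The plan is to sandwich $\sum_v d_v^r$ between two constant (depending only on $r$) multiples of $N(K_{1,r},G_n) = \sum_v \binom{d_v}{r}$, which by \eqref{eq:2starmean} is $\Theta(c_n^r)$. The lower bound $\sum_v d_v^r \gtrsim_r c_n^r$ is immediate and requires no hypothesis on the edges: since $\binom{d}{r}\le d^r/r!$ for every integer $d\ge 0$ (with $\binom{d}{r}=0$ when $d<r$), summing over $v$ gives $\sum_v d_v^r \ge r!\sum_v \binom{d_v}{r} = r!\,N(K_{1,r},G_n)=\Theta(c_n^r)$. The real content is the matching upper bound $\sum_v d_v^r \lesssim_r c_n^r$, and this is where the assumption $\max\{d_u,d_v\}\ge r$ on every edge is used.

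For the upper bound I would split the vertices by degree, writing $H:=\{v:d_v\ge r\}$ and $L:=\{v:1\le d_v\le r-1\}$ (vertices of degree $0$ contribute nothing). On $H$ the quantities $d_v^r$ and $\binom{d_v}{r}$ are comparable: since $d^r/\binom{d}{r}=r!\prod_{j=0}^{r-1} d/(d-j)$ and each factor $d/(d-j)$ is decreasing in $d$, this ratio is maximized over $d\ge r$ at $d=r$, where it equals $r^r$; hence $d^r\le r^r\binom{d}{r}$ for all integers $d\ge r$, so that
\begin{equation*}
\sum_{v\in H} d_v^r \le r^r\sum_{v\in H}\binom{d_v}{r} = r^r\,N(K_{1,r},G_n) = O(c_n^r).
\end{equation*}
Thus the high-degree vertices are controlled directly by the mean assumption.

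The main obstacle is the low-degree part $\sum_{v\in L} d_v^r$: here $\binom{d_v}{r}=0$, so \eqref{eq:2starmean} says nothing about these vertices, and a priori there could be enough of them to inflate the sum. This is exactly what the edge hypothesis rules out. Indeed, for $v\in L$ we have $d_v\le r-1<r$, so every edge incident to $v$ must, by $\max\{d_u,d_v\}\ge r$, have its other endpoint in $H$; in particular $L$ is an independent set, and $\sum_{v\in L} d_v$ equals the number $e(L,H)$ of edges between $L$ and $H$, which is at most $\sum_{w\in H} d_w$. Combining this with the crude bound $d_v^r\le (r-1)^{r-1}d_v$ valid on $L$, and with $d_w\le d_w^r$ valid on $H$ (as $d_w\ge 1$), I obtain
\begin{equation*}
\sum_{v\in L} d_v^r \le (r-1)^{r-1}\sum_{v\in L} d_v \le (r-1)^{r-1}\sum_{w\in H} d_w \le (r-1)^{r-1}\sum_{w\in H} d_w^r = O(c_n^r),
\end{equation*}
where the final estimate reuses the high-degree bound. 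Adding the contributions of $H$ and $L$ yields $\sum_v d_v^r = O(c_n^r)$, which together with the lower bound gives $\sum_v d_v^r=\Theta(c_n^r)$, as claimed. The only delicate point is the low-degree bookkeeping (the independent-set observation and the routing of all low-degree edges into $H$); the remaining steps are routine comparisons of $d^r$ with $\binom{d}{r}$.
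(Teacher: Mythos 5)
Your proof is correct and follows essentially the same route as the paper's: both split the degree sum at the threshold $r$, use the comparison $d^r\le r^r\binom{d}{r}$ for $d\ge r$ and $d^r\le (r-1)^{r-1}d$ for $d<r$, and use the edge hypothesis to route the low-degree degree-sum into the high-degree vertices (your identity $\sum_{v\in L}d_v=e(L,H)\le\sum_{w\in H}d_w$ is just a restatement of the paper's inequality $\tfrac12\sum_v d_v\le\sum_v d_v\bm 1\{d_v\ge r\}$). The only cosmetic difference is that you spell out the trivial lower bound, which the paper leaves implicit.
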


\begin{proof} In this case, the following inequality holds 
\begin{align}\label{eq:edges}
\frac{1}{2} \sum_{v\in V(G_n)}d_v \le   \sum_{v\in V(G_n)}d_v \bm 1\{d_v\ge r \}.
\end{align}
To see this note that if an edge $(u,v) \in E(G_n)$ has $\min\{d_u,d_v\}\ge r$, then that edge is counted two times in the RHS above, and an edge $(u,v) \in E(G_n)$ which has $\min\{d_u,d_v\}\le r-1$ (but $\max\{d_u,d_v\}\ge r$)  is counted once in the RHS, whereas every edge of $E(G_n)$ is counted twice in the LHS. 

Then  
\begin{align*}
\sum_{v\in V(G_n)}d_v^r=&\sum_{v\in V(G_n)}d_v^r 1\{d_v<r\}+\sum_{v\in V(G_n)}d_v^r\{d_v\ge r\}\\
\le &(r-1)^{r-1}\sum_{v\in V(G_n)}d_v+r^r \sum_{v\in V(G_n)}{d_v\choose r}\\
\le &2r^{r-1}\sum_{v\in V(G_n)}d_v \bm 1\{d_v\ge r \}+r^r\sum_{v\in V(G_n)}{d_v\choose r} \tag*{(using \eqref{eq:edges})}\\
\le &2 r^r \sum_{v\in V(G_n)}{d_v\choose r}+ r^r \sum_{v\in V(G_n)}{d_v\choose r}=3 r^r \sum_{v\in V(G_n)}{d_v\choose r},
\end{align*}
from which the desired conclusion follows on using \eqref{eq:2starmean}. 
\end{proof}

Throughout the rest of this section, we will thus assume, that $\max\{d_u,d_v\}\ge r$, for all edges $(u,v)\in G_n$ and, hence, \eqref{eq:2starmean} implies $\eqref{eq:2starmean_k}$.
Note that \eqref{eq:2starmean_k} implies $$\Delta(G_n):=\max_{v\in V(G_n)}d_v =\Theta(c_n).$$
In fact, using \eqref{eq:2starmean_k} it can be shown that there are not too many vertices $v \in V(G_n)$ with $d_v=O(c_n)$. To this end, we have the following definition:

\begin{defn}\label{def:big} Fix $\varepsilon >0$, such that $\varepsilon\ne \theta_u$ for any $u\in \N$.  (This can be done, as the set $\{\theta_u, u\in \N\}$ is countable.) A vertex $v\in V(G_n)$ is said to be $\varepsilon$-{\it big} if $d_v\geq \varepsilon c_n$. Denote the subset of $\varepsilon$-big vertices by $V_{\varepsilon}(G_n)$. 
\end{defn}

The following lemma is an easy consequence of  \eqref{eq:2starmean_k} and the above definition. 

\begin{lem}\label{lm:deglarge} Assume \eqref{eq:2starmean_k} holds. Then for $n$ large enough,  number of $\varepsilon$-{\it big} vertices  $|V_\varepsilon(G_n)|$ does not depend on $n$. 
\end{lem}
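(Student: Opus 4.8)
The plan is to first establish a uniform-in-$n$ upper bound on $|V_\varepsilon(G_n)|$ from the moment condition \eqref{eq:2starmean_k}, and then to use the convergence of the individual ordered degrees in \eqref{eq:degassumption} to pin down the exact value of $|V_\varepsilon(G_n)|$ for all large $n$.

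First I would note that every $\varepsilon$-big vertex $v$ satisfies $d_v^r \ge \varepsilon^r c_n^r$. Writing $\sum_{v \in V(G_n)} d_v^r \le C_2\, c_n^r$ for the upper constant furnished by \eqref{eq:2starmean_k}, and summing $d_v^r$ over the $\varepsilon$-big vertices alone, gives $|V_\varepsilon(G_n)|\, \varepsilon^r c_n^r \le C_2\, c_n^r$, hence
\begin{equation*}
|V_\varepsilon(G_n)| \le \frac{C_2}{\varepsilon^r} \quad \text{for all large } n.
\end{equation*}
Since the ordered degrees $d_{(1)} \ge d_{(2)} \ge \cdots$ give non-increasing limits $\theta_1 \ge \theta_2 \ge \cdots \ge 0$, the set $\{u \in \N : \theta_u > \varepsilon\}$ is an initial segment $\{1, \ldots, m\}$. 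This $m$ is finite: if $\theta_u > \varepsilon$ held for all $u$, then for any fixed $N$ and all large $n$ we would have $d_{(u)} > \varepsilon c_n$ for $u = 1, \ldots, N$ by \eqref{eq:degassumption}, producing $N$ distinct $\varepsilon$-big vertices and contradicting the uniform bound once $N > C_2/\varepsilon^r$. Thus $m \le C_2/\varepsilon^r < \infty$.

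It then remains to show $|V_\varepsilon(G_n)| = m$ for all large $n$ by matching bounds, and here the hypothesis $\varepsilon \ne \theta_u$ from Definition \ref{def:big} is essential. For the lower bound, each $u \le m$ has $\theta_u > \varepsilon$, so $d_{(u)}/c_n \to \theta_u$ forces $d_{(u)} \ge \varepsilon c_n$ for all large $n$, giving $|V_\varepsilon(G_n)| \ge m$. For the upper bound (assuming $|V(G_n)| > m$, which holds for large $n$ since $\theta_{m+1}$ is assumed to exist), the relation $\theta_{m+1} \le \varepsilon$ together with $\theta_{m+1} \ne \varepsilon$ yields $\theta_{m+1} < \varepsilon$, whence $d_{(m+1)} < \varepsilon c_n$ for all large $n$; monotonicity of the ordered degrees then gives $d_{(v)} \le d_{(m+1)} < \varepsilon c_n$ for every $v \ge m+1$, so no vertex of rank exceeding $m$ is $\varepsilon$-big and $|V_\varepsilon(G_n)| \le m$. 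Combining the two bounds yields $|V_\varepsilon(G_n)| = m$ for all large $n$, which is independent of $n$ as claimed. I do not anticipate a serious obstacle; the only point requiring care is precisely the use of $\varepsilon \ne \theta_u$, which prevents the threshold degree $d_{(m+1)}$ from hovering near the cutoff $\varepsilon c_n$ and thereby rules out any oscillation of the count.
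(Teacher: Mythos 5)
Your proof is correct and follows essentially the same route as the paper: identify the index $m$ with $\theta_m > \varepsilon > \theta_{m+1}$, then use \eqref{eq:degassumption} together with the requirement $\varepsilon \ne \theta_u$ to get $d_{(m+1)} < \varepsilon c_n < d_{(m)}$ for all large $n$, hence $|V_\varepsilon(G_n)| = m$. Your preliminary bound $|V_\varepsilon(G_n)| \le C_2/\varepsilon^r$ merely makes explicit the finiteness of $\{u : \theta_u > \varepsilon\}$, which the paper instead absorbs into the assumed existence of $\eta(\varepsilon)$ (justified elsewhere by noting $\sum_{u=1}^{\infty} \theta_u^r < \infty$ via Fatou's lemma).
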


\begin{proof}  Let $\eta=\eta(\varepsilon)\in \N$ be such that $\theta_\eta>\varepsilon>\theta_{\eta+1}$. (Note that such a $\eta$ exists for $\varepsilon$ small enough, whenever $\eta_0=\lim_{\varepsilon\rightarrow 0}\eta(\varepsilon)\ge 1$.\footnote{Since $\eta(\varepsilon)$ is monotonic non-increasing in $\varepsilon$, the limit $\eta_0:=\lim_{\varepsilon\rightarrow 0}\eta(\varepsilon)$ exists. If $\eta_0=0$, then $\max_{v\in V(G_n)}d_v=o(c_n)$, and the first term in the RHS of \eqref{eq:TS2_I} is trivially zero.}) Then for all $n$ large enough,  $d_{\eta+1}<\varepsilon c_n<d_\eta$, and so $|V_\varepsilon(G_n)|=\eta$. Thus the number of $\varepsilon$-{\it big} vertices is free of $n$, and depends only on $\varepsilon$.   
%
\end{proof}

Define $G_{n, \varepsilon}$ to be the subgraph of $G_n$  obtained by removing the edges between the $\varepsilon$-big vertices.  Denote by $T(K_{1, r}, G_{n, \varepsilon})$ the  number of monochromatic $r$-stars in $G_{n, \varepsilon}$. The following lemma shows that removing the edges between the $\varepsilon$-big vertices of $G_n$ does not change the number of monochromatic $r$-stars in $G_n$, in the limit:

\begin{lem}\label{lm:Geps} Assume \eqref{eq:2starmean} holds. Then for every fixed $\varepsilon>0$, as $n \rightarrow \infty$, $$N(F, G_{n})-N(F, G_{n, \varepsilon})=o(c_n^r) \quad  \text{and} \quad N_{\mathrm{ind}}(F, G_{n})-N_{\mathrm{ind}}(F, G_{n, \varepsilon})=o(c_n^r),$$
for all $F \supseteq K_{1, r}$  with $|V(F)|=r+1$. Consequently, 
$$\lim_{n \rightarrow \infty}\E|T(K_{1, r}, G_n)-T(K_{1, r}, G_{n, \varepsilon})|\rightarrow 0.$$
\end{lem}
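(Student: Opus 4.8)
The plan is to exploit the fact that $G_{n,\varepsilon}$ differs from $G_n$ only by the deletion of edges lying between $\varepsilon$-big vertices, and that by Lemma \ref{lm:deglarge} the number of such vertices, $|V_\varepsilon(G_n)|=\eta$, is bounded uniformly in $n$. Consequently at most $\binom{\eta}{2}=O(1)$ edges are removed, so it suffices to bound, for a single removed edge $(a,b)$ between big vertices, the number of copies of $F$ that contain $(a,b)$; a union bound over the $O(1)$ removed edges then controls the full difference $N(F,G_n)-N(F,G_{n,\varepsilon})$.

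First I would record the degree bound $\Delta(G_n)=\max_v d_v=O(c_n)$, which follows from \eqref{eq:2starmean_k} since $\Delta^r\le \sum_v d_v^r=\Theta(c_n^r)$. Next, for a fixed removed edge $(a,b)$ I would count copies of $F$ through $(a,b)$ by splitting on the image of $(a,b)$ inside $F$. Since $F\supseteq K_{1,r}$ on $r+1$ vertices, every copy of $F$ carries a designated star-center adjacent to the other $r$ vertices. If one of $a,b$ plays the center, the remaining $r-1$ vertices are neighbours of that (big) vertex, giving at most $O(\Delta^{r-1})=O(c_n^{r-1})$ choices. If both $a,b$ are leaves, the center is a common neighbour of $a$ and $b$ (at most $\min(d_a,d_b)\le \Delta$ choices) and the remaining $r-2$ leaves are its neighbours (at most $\Delta^{r-2}$ choices), again $O(c_n^{r-1})$. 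In all cases the count is $O(c_n^{r-1})=o(c_n^r)$, and summing over the $O(1)$ removed edges yields $N(F,G_n)-N(F,G_{n,\varepsilon})=o(c_n^r)$.

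For the induced counts the argument is identical up to one extra observation, which I expect to be the main subtlety: deleting edges can both destroy and create induced copies of $F$, so I must control all vertex sets $S$ with $|S|=r+1$ for which $G_n[S]$ and $G_{n,\varepsilon}[S]$ differ. Any such $S$ must contain a removed edge $(a,b)$, hence two big vertices; moreover, whether $G_n[S]\cong F$ or $G_{n,\varepsilon}[S]\cong F$, the relevant induced graph contains $K_{1,r}$, and since $G_{n,\varepsilon}\subseteq G_n$ this forces $G_n[S]\supseteq K_{1,r}$. The very same casework as above, now counting $(r+1)$-sets $S\supseteq\{a,b\}$ with $G_n[S]\supseteq K_{1,r}$, gives $O(c_n^{r-1})$ such sets per removed edge, so $|N_{\mathrm{ind}}(F,G_n)-N_{\mathrm{ind}}(F,G_{n,\varepsilon})|=o(c_n^r)$.

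Finally, for the consequence I would note that $T(K_{1,r},G_n)-T(K_{1,r},G_{n,\varepsilon})\ge 0$ since $G_{n,\varepsilon}$ is a subgraph of $G_n$, so the absolute value can be dropped and $\E|T(K_{1,r},G_n)-T(K_{1,r},G_{n,\varepsilon})|=\frac{1}{c_n^r}\big(N(K_{1,r},G_n)-N(K_{1,r},G_{n,\varepsilon})\big)$, using that each $r$-star is monochromatic with probability $c_n^{-r}$. Applying the first part with $F=K_{1,r}$ makes the numerator $o(c_n^r)$, so the expectation is $o(1)$. The main obstacle is the induced-count bookkeeping described above; the homomorphism-counting bounds themselves are routine once $\Delta(G_n)=O(c_n)$ is in hand.
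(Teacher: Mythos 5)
Your proposal is correct and follows essentially the same route as the paper: every copy of $F$ affected by the edge deletion must contain an edge joining two $\varepsilon$-big vertices, there are $O_\varepsilon(1)$ such edges by Lemma \ref{lm:deglarge}, and each extends to at most $O(\Delta(G_n)^{r-1})=O(c_n^{r-1})$ copies of a connected graph on $r+1$ vertices, with the expectation bound then following since the $T$-difference is nonnegative and each star is monochromatic with probability $c_n^{-r}$. Your explicit center/leaf casework and your handling of induced copies (noting that deletions can both destroy and create induced copies, but any affected $(r+1)$-set still spans a star in $G_n$ through a removed edge) merely spell out details the paper treats in one terse sentence, so this is the same argument, not a different one.
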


\begin{proof} If a graph $F \supseteq K_{1, r}$ with $|V(F)|=r+1$ is a subgraph of $G_n$, but not a subgraph of $G_{n, \varepsilon}$, then it must have at least one edge with both end-points in $V_{\varepsilon}(G_n)$. Choosing this edge in $|V_\varepsilon(G_n)|^2$ ways and the remaining $r-1$ vertices in $O(c_n^{r-1})$ ways (since the maximum degree $\Delta(G_n) =\Theta(c_n)$), it follows that 
$$N(F, G_{n})-N(F, G_{n, \varepsilon})=O(c_n^{r-1}|V_\varepsilon(G_n)|^2)=o(c_n^r),$$ as $n \rightarrow \infty$, since by Lemma \ref{lm:deglarge} $|V_\varepsilon(G_n)|=O_\varepsilon(1)$. As the number of induced copies of $F$ in $G_n$ which are not in $G_{n, \varepsilon}$, is bounded by the total number of copies of $F$  in $G_{n, \varepsilon}$ which are not in $G_n$, the result on induced copies follows.

In particular,  
\begin{align*}
\E|T(K_{1, r}, G_n)-T(K_{1, r}, G_{n, \varepsilon})|\lesssim \frac{1}{c_n^r}(c_n^{r-1} |V_\varepsilon(G_n)|^2)=\frac{1}{c_n}|V_\varepsilon(G_n)|^2 \rightarrow 0,
\end{align*}
as $c_n\rightarrow \infty$. 
\end{proof}

We now decompose the graph $G_{n, \varepsilon}$ based on the degree of the vertices as follows: 
\begin{itemize}
\item[--]
Let $G_{n, \varepsilon}^{+}$ be the sub-graph of $G_{n, \varepsilon}$ formed by the $\varepsilon$-big vertices and the edges incident on them. More formally, it has vertex set $V_\varepsilon(G_n) \bigcup N_{G_{n, \varepsilon}}(V_\varepsilon(G_n))$, where $N_{G_{n, \varepsilon}}(V_\varepsilon(G_n))$ is neighborhood of $V_\varepsilon(G_n)$ in $G_{n, \varepsilon}$,\footnote{For a graph $H=(V(H), E(H))$ and $S\subseteq V(H)$, the  {\it neighborhood} of $S$ in $H$ is $N_H(S)=\{v \in V(H): \exists ~u \in S \text{ such that } (u, v) \in E(H)\}$.} and edge set $\{(u, v) \in G_{n, \varepsilon}: v\in V_\varepsilon(G_n)\}$. Note that by construction $G_{n,\varepsilon}^+$ is a bipartite graph.

\item[--] Let $G_{n, \varepsilon}^{-}$ denote the induced subgraph of $G_{n, \varepsilon}$ with vertex set $V(G_n)\setminus V_\varepsilon(G_n)$.
\end{itemize}

The decomposition of the graph $G_{n, \varepsilon}$ is illustrated in Figure \ref{fig:G12}. Note that $G_{n, \varepsilon}^{+}$ and $G_{n, \varepsilon}^{-}$ have common vertices (the black vertices in Figure \ref{fig:G12}), but no common edges, and consequently no common $r$-stars. This implies 
$$T(K_{1, r}, G_{n, \varepsilon})= T_+(K_{1, r}, G_{n, \varepsilon}^{+})+T(K_{1, r}, G_{n, \varepsilon}^{-})+R(K_{1, r}, G_{n, \varepsilon}),$$
where $T(K_{1, r}, G_{n, \varepsilon}^{-})$ is the number of monochromatic $r$-stars in $G_{n, \varepsilon}^{-}$; and (recalling the definition of $a_v(\bm u, G_n)$ from \eqref{eq:m2stars})
\begin{align}\label{eq:T1}
T_+(K_{1, r}, G_{n, \varepsilon}^{+}):=\sum_{v=1}^{|V_\varepsilon(G_n)|}\sum_{\bm u \in {V(G_n) \choose r}} a_{vu_1}(G_n) a_v(\bm u, G_n) \bm 1\{X_v=X_{u_1}=X_{\bm u}\},  
\end{align} 
counts the number of monochromatic $r$-stars in $G_{n, \varepsilon}^+$ with central vertex in $V_\varepsilon(G_n)$;\footnote{Note that $T_+(K_{1, r}, G_{n, \varepsilon}^{+})$ is not the number of $r$-stars in $G_{n, \varepsilon}^+$: It does not include the $r$-stars in $G_{n, \varepsilon}^+$ with central vertex in $N_{G_{n,\varepsilon}}(V_\varepsilon(G_n))$ (the black vertices in Figure \ref{fig:G12}). Instead, these $r$-stars are included in the remainder term $R(K_{1, r}, G_{n, \varepsilon})$.} and the {\it remainder term}  
\begin{align}\label{eq:delta}
R(K_{1, r}, G_{n, \varepsilon})&:=\sum_{v \notin V_\varepsilon(G_n)}\sum_{u_1\in V_\varepsilon(G_n)} \sum_{\bm u \in {V(G_n) \choose r-1}}  a_{v u_1}(G_n)a_v(\bm u, G_{n}) \bm 1\{X_v=X_{\bm u}\}.
\end{align}

\begin{figure*}[h]
\centering
\begin{minipage}[c]{1.0\textwidth}
\centering
\includegraphics[width=3.0in]
    {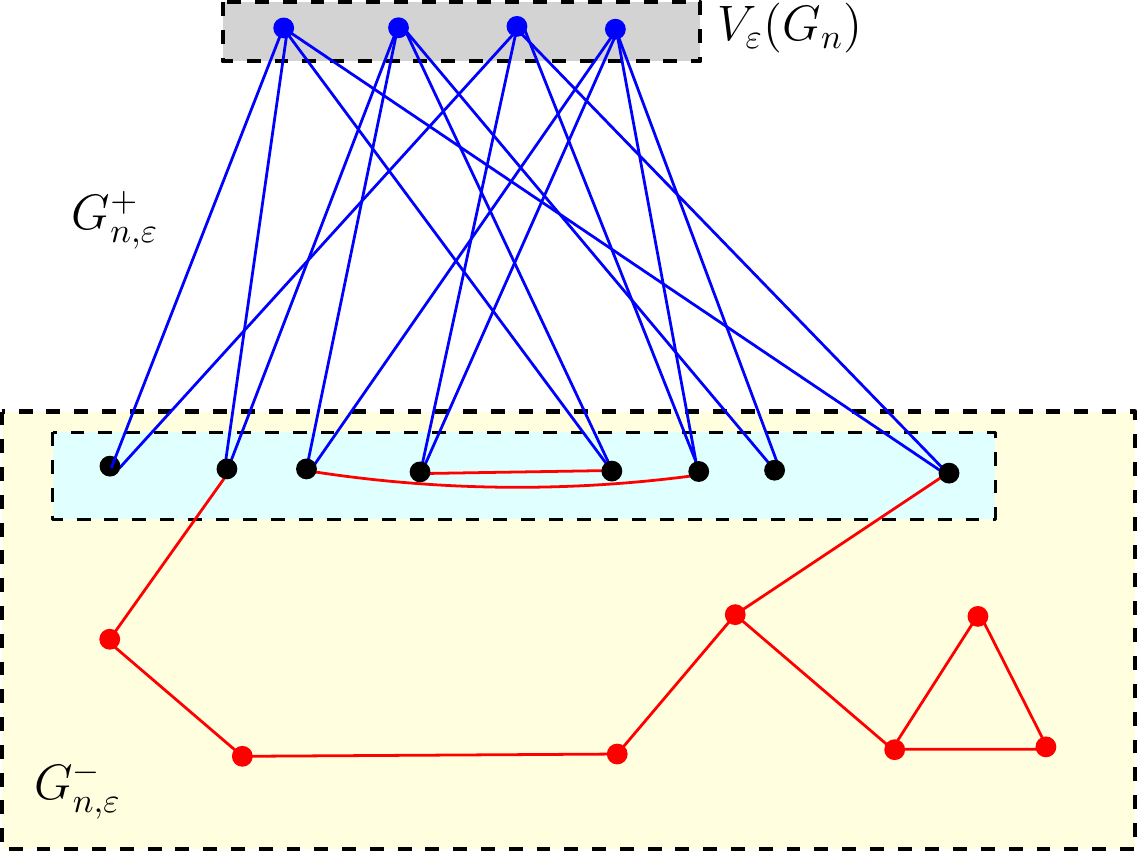}\\
\end{minipage}
\caption{\small{The decomposition of $G_{n, \varepsilon}$: The graph formed by the blue edges is $G_{n, \varepsilon}^{+}$ and the graph formed by the red edges is $G_{n, \varepsilon}^{-}$. Note that the black vertices belong to both  $G_{n, \varepsilon}^{+}$ and $G_{n, \varepsilon}^{-}$.}}
\label{fig:G12}
\end{figure*}

The following lemma shows that the remainder term goes to zero in expectation, and therefore, in probability. 
  
\begin{lem}\label{lm:TS2} Let $R(K_{1, r}, G_{n, \varepsilon})$ be as defined above in \eqref{eq:delta}. Under the assumptions of Theorem \ref{th:2star}, 
$$\lim_{\varepsilon\rightarrow 0}\lim_{n \rightarrow \infty}\E(R(K_{1, r}, G_{n, \varepsilon}))=0.$$ 
\end{lem}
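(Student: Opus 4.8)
The plan is to bound $\E(R(K_{1,r},G_{n,\varepsilon}))$ directly and show that it already vanishes as $n\to\infty$ for each fixed $\varepsilon>0$, so that the outer limit $\varepsilon\to0$ is immediate. A genuine $r$-star counted by \eqref{eq:delta} has $r+1$ distinct vertices, which are monochromatic with probability $c_n^{-r}$; the degenerate terms (where the distinguished big leaf $u_1$ coincides with a vertex of $\bm u$) contribute only a lower-order $O(c_n^{-1})$ correction. Hence, writing $b(v):=\sum_{u\in V_\varepsilon(G_n)}a_{vu}(G_n)$ for the number of $\varepsilon$-big neighbors of $v$,
\[
\E(R(K_{1,r},G_{n,\varepsilon}))\lesssim\frac{1}{c_n^{r}}\sum_{v\notin V_\varepsilon(G_n)}\sum_{u_1\in V_\varepsilon(G_n)}\sum_{\bm u\in\binom{V(G_n)}{r-1}}a_{vu_1}(G_n)\,a_v(\bm u,G_n)=\frac{1}{c_n^{r}}\sum_{v\notin V_\varepsilon(G_n)}b(v)\binom{d_v}{r-1}\lesssim\frac{1}{c_n^{r}}\sum_{v\notin V_\varepsilon(G_n)}b(v)\,d_v^{\,r-1},
\]
since $a_{vu_1}(G_n)$ forces $u_1$ to be a big neighbor of $v$ and $a_v(\bm u,G_n)$ forces $\bm u$ to be an $(r-1)$-subset of the neighborhood of $v$. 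The problem thus reduces to showing $\sum_{v\notin V_\varepsilon(G_n)}b(v)\,d_v^{\,r-1}=o(c_n^{r})$ for $r\ge 2$ (the edge case $r=1$ is the classical one).

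The mechanism forcing decay is the interplay between two a priori estimates pulling in opposite directions. First, since there are only $|V_\varepsilon(G_n)|=O_\varepsilon(1)$ big vertices (Lemma \ref{lm:deglarge}), each of degree at most $\Delta(G_n)=\Theta(c_n)$, the number of big--low incidences is small:
\[
\sum_{v}b(v)=\sum_{u\in V_\varepsilon(G_n)}d_u\le|V_\varepsilon(G_n)|\,\Delta(G_n)\lesssim_\varepsilon c_n.
\]
Second, Observation \ref{obs:edges} gives the moment bound $\sum_{v}d_v^{\,r}\lesssim c_n^{r}$. I would then interpolate between these via H\"older's inequality with conjugate exponents $r$ and $r/(r-1)$:
\[
\sum_{v}b(v)\,d_v^{\,r-1}\le\Big(\sum_{v}b(v)^{r}\Big)^{1/r}\Big(\sum_{v}d_v^{\,r}\Big)^{(r-1)/r}\lesssim_\varepsilon \big(c_n\big)^{1/r}\big(c_n^{r}\big)^{(r-1)/r}=c_n^{\,r-1+1/r},
\]
where I used $b(v)^{r}\le|V_\varepsilon(G_n)|^{r-1}b(v)$ together with the first estimate. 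Consequently $\E(R(K_{1,r},G_{n,\varepsilon}))\lesssim_\varepsilon c_n^{1/r-1}\to0$ as $n\to\infty$ for every fixed $\varepsilon>0$, which is more than the claimed double limit.

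The step I expect to be the genuine obstacle is precisely this H\"older interpolation, because neither crude bound on $b(v)$ suffices alone. Bounding $b(v)\le|V_\varepsilon(G_n)|$ leaves $\sum_{v}d_v^{\,r-1}$, which can be of order $c_n^{r}$ (for instance if $G_n$ contains $\Theta(c_n^{r})$ vertices of degree $r$), so the expectation would not vanish; bounding $b(v)\le d_v$ returns $\sum_{v}d_v^{\,r}=\Theta(c_n^{r})$, giving only an $O(1)$ bound. What actually saves the estimate is that a low-degree vertex contributes to $R$ only through its big neighbors, and the total edge budget of the $O_\varepsilon(1)$ big vertices is merely $O_\varepsilon(c_n)$; H\"older is the clean device that exploits this edge budget and the moment bound $\sum_v d_v^{\,r}=\Theta(c_n^{r})$ simultaneously. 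The only remaining points to verify are the harmless reductions flagged above: that the degenerate colorings in which some of $u_1,\bm u$ coincide contribute a lower-order $O(c_n^{-1})$ term, and that restricting to $d_v\ge r$ (forced by $\binom{d_v}{r-1}\neq0$) entails no loss.
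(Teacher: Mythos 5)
Your central estimate is correct, and it is a genuinely different argument from the paper's. The paper never interpolates: it bounds $\sum_{\bm u} a_v(\bm u, G_n)\le d_v^{r-1}\le (\varepsilon c_n)^{r-1}$ for every low vertex, reducing the problem to showing $\frac{\varepsilon^{r-1}}{c_n}\sum_{u\in V_\varepsilon(G_n)}d_u\to 0$, and then splits the big vertices at a second threshold $M\varepsilon c_n$: the vertices with $d_u\ge M\varepsilon c_n$ contribute at most $\frac{1}{M^{r-1}c_n^r}\sum_u d_u^r=O(M^{1-r})$, while those with $\varepsilon c_n\le d_u<M\varepsilon c_n$ contribute in the limit at most $\sum_u \theta_u^r \bm 1\{\varepsilon\le\theta_u\le M\varepsilon\}$, which is killed by dominated convergence as $\varepsilon\to 0$. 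That route uses assumption \eqref{eq:degassumption} and genuinely needs the iterated limit, because for fixed $\varepsilon$ the paper's intermediate bound tends to $\varepsilon^{r-1}\sum_{u\le \eta(\varepsilon)}\theta_u\neq 0$. Your H\"older interpolation between the incidence budget $\sum_v b(v)=\sum_{u\in V_\varepsilon(G_n)}d_u\le |V_\varepsilon(G_n)|\Delta(G_n)\lesssim_\varepsilon c_n$ and the moment bound $\sum_v d_v^r\lesssim c_n^r$ (Observation \ref{obs:edges}), via $b(v)^r\le |V_\varepsilon(G_n)|^{r-1}b(v)$, avoids \eqref{eq:degassumption} entirely and proves the stronger statement $\E(R(K_{1,r},G_{n,\varepsilon}))\lesssim_\varepsilon c_n^{1/r-1}\to 0$ for each fixed $\varepsilon$ when $r\ge 2$; your diagnosis of why neither crude bound on $b(v)$ suffices alone is also accurate. (Both proofs implicitly require $r\ge 2$: for $r=1$ the remainder genuinely does not vanish, e.g.\ $G_n=K_{1,n}$, $c_n=n$, but there the decomposition does not need it either.)

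The one step you dismissed as harmless is the real problem in your write-up. The degenerate terms with $u_1\in\bm u$ are \emph{not} an $O(c_n^{-1})$ correction. For $r=2$ they are exactly the monochromatic big--low edges, so their expected number is $\frac{1}{c_n}\sum_{u\in V_\varepsilon(G_n)}|\{v\notin V_\varepsilon(G_n): a_{uv}(G_n)=1\}|$, which under \eqref{eq:degassumption} converges to $\sum_{u\le\eta(\varepsilon)}\theta_u$; this does not vanish as $n\to\infty$, and as $\varepsilon\to 0$ it can even diverge, since Theorem \ref{th:2star} only forces $\sum_u\theta_u^r<\infty$ (take $\theta_u=1/u$, realized by a disjoint union of stars $K_{1,\lfloor c_n/u\rfloor}$, which satisfies all hypotheses). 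Hence, if \eqref{eq:delta} is read literally as including those terms (with the indicator corrected to $\bm 1\{X_v=X_{u_1}=X_{\bm u}\}$, as in \eqref{eq:T1}), the statement you are proving is false, and no estimate can rescue it. The repair is to discard these terms rather than bound them: they are not monochromatic $r$-stars, and the only property the lemma is used for, namely $0\le T(K_{1,r},G_{n,\varepsilon})-T_+(K_{1,r},G_{n,\varepsilon}^{+})-T(K_{1,r},G_{n,\varepsilon}^{-})\le R(K_{1,r},G_{n,\varepsilon})$, already holds when the inner sum in \eqref{eq:delta} is restricted to $\bm u\not\ni u_1$: a monochromatic star centered at a low vertex with at least one big leaf is recovered by taking $u_1$ to be that leaf and $\bm u$ its remaining $r-1$ leaves. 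The paper makes the same (necessary) reading silently, since its first display assigns probability $c_n^{-r}$ to every term, which presumes all $r+1$ vertices are distinct. With that restriction in place, your argument is complete.
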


\begin{proof}Note that for $v \notin V_\varepsilon(G_n)$ $$ \sum_{\bm u \in {V(G_n) \choose r-1}}  a_v(\bm u, G_{n}) \le d_v^{r-1}\leq (\varepsilon c_n)^{r-1}.$$ 
Moreover $\sum_{v \notin V_\varepsilon(G_n)} a_{vu_1}(G_{n, \varepsilon})  \leq d_{u_1}$. Then, using \eqref{eq:delta}, for any $M>0$,
\begin{align}\label{eq:TS2_I}
\E(R(K_{1, r}, G_{n, \varepsilon}))\leq &  \frac{(\varepsilon c_n)^{r-1}}{c_n^r}  \sum_{u_1 \in V_\varepsilon(G_n)} d_{u_1} \nonumber \\
=& \frac{\varepsilon^{r-1}}{c_n} \left(\sum_{u:  \varepsilon c_n\leq d_u < M \varepsilon c_n} d_u+ \sum_{u: d_u\ge M\varepsilon c_n} d_u \right) \nonumber \\
\le &\frac{1}{c_n^r} \sum_{u: \varepsilon c_n\le d_u  < M \varepsilon c_n} d_u^r+ \frac{1}{M^{r-1}c_n^r} \sum_{u\in V(G_n)} d_u^r. 
\end{align} 
Since $\limsup_{n\rightarrow \infty}\frac{1}{c_n^r}\sum_{u\in V(G_n)}d_u^r<\infty$ (from Observation \ref{obs:edges}), the second term in the RHS of \eqref{eq:TS2_I} converges to $0$ on letting $n\rightarrow\infty$ followed by $M\rightarrow\infty$. 

Next, recall that  $\eta=\eta(\varepsilon)$ is such that $\theta_{\eta+1}< \varepsilon< \theta_\eta$.  
Thus, for all $n$ large enough, $d_{\eta+1}<\varepsilon c_n<d_\eta$, and as $n \rightarrow \infty$, the first term in the RHS above becomes 
\begin{align*}
\limsup_{n\rightarrow\infty}\sum_{u=1}^\eta \frac{d_u^r}{c_n^r} \bm 1\{d_u<M\varepsilon c_n\}\le \sum_{u=1}^\eta\theta_u^r \bm 1\{\theta_u\le M\varepsilon\}=\sum_{u=1}^\infty  \theta_u^r \bm 1\{\varepsilon\le \theta_u \leq M \varepsilon \},
\end{align*} which converges to $0$ on letting $\varepsilon\rightarrow 0$, by using DCT along with the fact that $\sum_{u=1}^\infty \theta_u^r\leq \limsup_{n\rightarrow \infty}\frac{1}{c_n^r}\sum_{u\in V(G_n)}d_u^r<\infty$ (by Fatou's lemma). 
\end{proof}


Combining Lemma \ref{lm:Geps} and Lemma \ref{lm:TS2} it follows that 
\begin{align}\label{eq:m2starreduction}
T(K_{1, r}, G_n)=&T(K_{1, r}, G_{n, \varepsilon})+o_P(1) \nonumber\\
= &T_+(K_{1, r}, G_{n, \varepsilon}^{+})+T(K_{1, r}, G_{n, \varepsilon}^{-})+o_P(1).
\end{align}
Therefore, the limiting distribution of the $T(K_{1, r}, G_n)$ is the same as that of $T_+(K_{1, r}, G_{n, \varepsilon}^{+})+T(K_{1, r}, G_{n, \varepsilon}^{-})$. 

\subsection{Independence in Moments of the Contributions from $G_{n, \varepsilon}^+$ and $G_{n, \varepsilon}^-$}
\label{sec:pfm1}

In this section we show that the number of monochromatic $K_{1, r}$ coming from  $G_{n, \varepsilon}^+$ and $G_{n, \varepsilon}^-$ are asymptotically independent in moments. Without loss of generality, assume the vertices in $V(G_n)$ are labelled $1, 2, \ldots, |V(G_n)|$ such that $d_1\geq d_2 \geq \cdots \geq d_{|V(G_n)|}$, and $\eta=\eta(\varepsilon)$ such that $\theta_{\eta+1}< \varepsilon< \theta_\eta$ (assuming $\eta_0=\lim_{\varepsilon\rightarrow 0}\eta(\varepsilon)\ge 1$). Then, by definition \eqref{eq:T1}, 
\begin{align}\label{eq:T1eps}
T_+(K_{1, r}, G_{n, \varepsilon}^{+})=\sum_{v=1}^\eta {T_{G_{n, \varepsilon}^+}(v)\choose r},\quad \text{where }  T_{G_{n, \varepsilon}^+}(v):=\sum_{u\in V(G_{n, \varepsilon})} a_{uv}(G_n)1\{X_u=X_v\},
\end{align}
is the number of monochromatic $r$-stars in $G_{n, \varepsilon}$, with central vertex $v \in V_{\varepsilon}(G_n)$.

Now, fix a finite positive integer $K \le \eta_0$. Then, for $\varepsilon >0$ small enough, $\eta(\varepsilon)\ge K$, and so $\{T_{G_{n, \varepsilon}^+}(v): 1 \leq v \leq K\}$ are well defined. The following lemma shows that this collection and $T(K_{1, 2}, G_{n, \varepsilon}^{-})$ are asymptotically independent in the moments. 

\begin{lem}\label{lm:ind} Assume \eqref{eq:2starmean} holds. Then for every finite $K \le \eta_0$ and non-negative integers $s, t_1,\cdots,t_K$, 
\begin{align}\label{eq:moment_ind}
\lim_{\varepsilon \rightarrow 0} \lim_{n\rightarrow\infty}\left|\E\left( {T}(K_{1, 2}, G_{n, \varepsilon}^{-})^s  \prod_{v=1}^K T_{G_{n, \varepsilon}^+}(v)^{t_v} \right)-\E  {T}(K_{1, 2}, G_{n, \varepsilon}^{-})^s \left(\E\prod_{v=1}^{K} T_{G_{n, \varepsilon}^+}(v)^{t_v}\right) \right|=0.
\end{align}
\end{lem}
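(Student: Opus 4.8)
The plan is to prove \eqref{eq:moment_ind} by the method of moments, expanding the mixed moment into a sum over combinatorial configurations and then isolating the configurations that force a color coincidence between the two parts. First I would expand
$T(K_{1,r},G_{n,\varepsilon}^{-})^{s}=\sum_{\mathbf S}\bm 1_{\mathbf S}$, where $\mathbf S=(S_1,\dots,S_s)$ ranges over ordered $s$-tuples of $r$-stars of $G_{n,\varepsilon}^{-}$ and $\bm 1_{\mathbf S}:=\prod_{i=1}^{s}\bm 1\{S_i\text{ monochromatic}\}$, and likewise
$\prod_{v=1}^{K}T_{G_{n,\varepsilon}^{+}}(v)^{t_v}=\sum_{\mathbf U}\bm 1_{\mathbf U}$, where $\mathbf U$ records, for each center $v\le K$ and each of its $t_v$ factors, a neighbor $u$ of $v$ in $G_{n,\varepsilon}$, and $\bm 1_{\mathbf U}:=\prod \bm 1\{X_u=X_v\}$. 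The crucial structural observation is that the two parts can only share a \emph{low-degree} vertex: the $K$ centers are $\varepsilon$-big and hence lie outside $G_{n,\varepsilon}^{-}$, and since all edges between $\varepsilon$-big vertices were deleted in forming $G_{n,\varepsilon}$, every neighbor $u$ occurring in $\mathbf U$ satisfies $d_u<\varepsilon c_n$. Thus a center $v$ never coincides with a star vertex or with another center's neighbor, and the sole possible overlap is a neighbor $u\in\mathbf U$ equal to a vertex of some star $S_i\in\mathbf S$.

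Both the joint moment and the product of moments are sums over the same index set $(\mathbf S,\mathbf U)$, so I would compare them term by term. When the vertex set of $\mathbf S$ is disjoint from that of $\mathbf U$, the two families of indicators depend on disjoint collections of independently colored vertices, so $\E[\bm 1_{\mathbf S}\bm 1_{\mathbf U}]=\E[\bm 1_{\mathbf S}]\,\E[\bm 1_{\mathbf U}]$ and the corresponding contributions cancel in the difference. Hence the expression inside the limit in \eqref{eq:moment_ind} equals
$\sum_{(\mathbf S,\mathbf U)\text{ overlapping}}\big(\E[\bm 1_{\mathbf S}\bm 1_{\mathbf U}]-\E[\bm 1_{\mathbf S}]\,\E[\bm 1_{\mathbf U}]\big)$,
and it suffices to bound each of the two expectations over overlapping configurations.

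The main computation is to show that an overlap is of lower order in $\varepsilon$. An overlap merges the color of a center $v$ with the color of a star $S_i$, forcing an extra block of vertices onto a single color; carrying out the vertex count with the low-degree bound $d_u<\varepsilon c_n$ and $\Delta(G_n)=\Theta(c_n)$, the prototypical overlapping term (a neighbor $u$ of $v$ identified with a leaf of a star centered at a low-degree vertex $w$, whose remaining leaves are $\ell_2,\dots,\ell_r$) is controlled by
\[
\frac{1}{c_n^{\,r+1}}\sum_{u \sim v}\,\sum_{w \sim u} d_w^{\,r-1}
\;\le\; \frac{1}{c_n^{\,r+1}}\sum_{u \sim v} d_u\,(\varepsilon c_n)^{r-1}
\;\le\; \frac{d_v\,(\varepsilon c_n)^{r}}{c_n^{\,r+1}}
\;\lesssim_{r}\; \varepsilon^{r},
\]
the factor $c_n^{-(r+1)}$ arising from the merge of the $r+2$ vertices $v,w,u,\ell_2,\dots,\ell_r$ into one color class; the independent-product contribution $\E[\bm 1_{\mathbf S}]\,\E[\bm 1_{\mathbf U}]$ of the same configuration obeys the identical bound. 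Since $s,t_1,\dots,t_K,K,r$ are fixed there are only finitely many configuration types, each carrying a positive power of $\varepsilon$ after letting $n\to\infty$, so summing and then letting $\varepsilon\to 0$ yields \eqref{eq:moment_ind}.

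The hard part is the uniform bookkeeping of this overlap estimate across all configuration types at arbitrary moment orders: I must verify that every ``bridge'' joining the $\mathbf U$-part to the $\mathbf S$-part is routed through at least one extra low-degree leaf whose summation contributes a factor $\varepsilon c_n$ against a compensating $c_n^{-1}$ from the forced color merge, so that each overlapping term is $O_\square(\varepsilon^{\text{positive}})$ rather than merely $O(1)$. Observation \ref{obs:edges} (giving $\sum_{v}d_v^{r}=\Theta(c_n^{r})$), Lemma \ref{lm:deglarge} (giving $|V_\varepsilon(G_n)|=O_\varepsilon(1)$), and the degree split $d_u<\varepsilon c_n$ on $G_{n,\varepsilon}^{-}$ versus $d_v=\Theta(c_n)$ at the centers are precisely the ingredients that make this accounting go through uniformly.
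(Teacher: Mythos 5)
Your proposal is correct in outline and follows the same core strategy as the paper's proof: expand both moments over configurations, note that configurations with $V(\mathbf{S})\cap V(\mathbf{U})=\emptyset$ factor exactly and cancel, and show that overlapping configurations are negligible because any overlap between the two parts must occur at a low-degree vertex, so each bridge trades a free factor of $c_n$ for a factor $\varepsilon c_n$. Where you differ is in how the overlap estimate is organized. The paper first proves a correlation-type inequality (Lemma \ref{lm:beta_H}): for $\beta(H)=c_n^{-(|V(H)|-\nu(H))}$ one has $\beta(H_1\cup H_2)\ge \beta(H_1)\beta(H_2)$, whence $|\beta(H_1\cup H_2)-\beta(H_1)\beta(H_2)|\le \beta(H_1\cup H_2)$; this reduces everything to bounding $\sum_H \beta(H)\, N(H, G_{n,\varepsilon}^+[K], G_{n,\varepsilon}^-)$ over finitely many isomorphism types, which is then handled by a three-case analysis of the connected components of $H$ (meeting only $H_1$, only $H_2$, or both, the last case producing the factor $\varepsilon$). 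You instead use the triangle inequality and bound the joint and the product terms separately over overlapping configurations. This is legitimate, and your prototypical computation (the $\varepsilon^r$ bound, with $c_n^{-(r+1)}$ from merging $r+2$ vertices into one color class) is correct; your assertion that the product term ``obeys the identical bound'' is also true, but it needs justification — either via the domination $\beta(H_1)\beta(H_2)\le\beta(H_1\cup H_2)$ (which is exactly the paper's Lemma \ref{lm:beta_H}, bringing you back to their route) or by direct counting (pinning a star of $G_{n,\varepsilon}^-$ to pass through a prescribed vertex costs $O((\varepsilon c_n)^r)\cdot c_n^{-r}=O(\varepsilon^r)$ in place of the $O(1)$ a free star contributes). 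What you flag as ``the hard part'' — uniform bookkeeping over all configuration types, including components in which several stars and several center-edges glue together and components that remain free — is precisely what the paper's isomorphism-class decomposition and component-by-component product bound accomplish, so your plan does go through, but that case analysis is the bulk of the actual proof and should be carried out rather than asserted. One minor point: the $T(K_{1,2},G_{n,\varepsilon}^-)$ appearing in the lemma statement is a typo for $T(K_{1,r},G_{n,\varepsilon}^-)$, and you correctly work with $r$-stars throughout.
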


\subsubsection*{\textbf{Proof of Lemma} \ref{lm:ind}} For any labeled subgraph $H$ of $G_n$, define 
\begin{align}\label{eq:beta}
\beta(H):=\E\prod_{(u,v)\in E(H)} \bm 1\{X_u=X_v\}=\left(\frac{1}{c_n}\right)^{|V(H)|-\nu(H)},
\end{align}
where $\nu(H)$ is the number of connected components of $H$. Note that the definition of $\beta(\cdot)$ is invariant to the labelling of $H$, and so, it extends to unlabelled graphs as well. Thus, without loss of generality, we will define $\beta(H)$ as in \eqref{eq:beta}, for an unlabelled graph $H$ as well.

Let $H_1=(V(H_1), E(H_1))$ and $H_2=(V(H_2), E(H_2))$ be two (labelled) subgraphs of $G_n$, that is,  $V(H_1)$ and $V(H_2)$ are subsets of $V(G_n)$, which inherits the labelling induced by $V(G_n)$, and $E(H_1)$ and $E(H_2)$ are subsets of $E(H)$. Let $H_1\bigcup H_2=(V(H_1) \bigcup V(H_2), E(H_1)\bigcup E(H_2))$.

\begin{lem}\label{lm:beta_H} For any two finite graphs $H_1$ and $H_2$, $\beta\left(H_1\bigcup H_2\right)\ge \beta(H_1)\beta(H_2)$, where $\beta(\cdot)$ is defined above in \eqref{eq:beta}. 
\end{lem}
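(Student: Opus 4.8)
The plan is to recognize that $\beta(H) = c_n^{-\rho(H)}$, where $\rho(H) := |V(H)| - \nu(H)$ is the \emph{rank} of $H$ (equivalently, the number of edges in any spanning forest of $H$), and then to reduce the stated inequality to a subadditivity property of $\rho$. Since $c_n \geq 1$, the map $x \mapsto c_n^{-x}$ is non-increasing, so $\beta(H_1 \cup H_2) \geq \beta(H_1)\beta(H_2)$ is \emph{equivalent} to
\[
\rho(H_1 \cup H_2) \leq \rho(H_1) + \rho(H_2).
\]
Thus the entire content of the lemma is the subadditivity of the graph rank function, and this is what I would prove.

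To establish subadditivity I would argue via spanning forests. Let $F_1$ be a spanning forest of $H_1$ and $F_2$ a spanning forest of $H_2$, so that $|E(F_i)| = \rho(H_i)$ and $V(F_i) = V(H_i)$, and set $F := F_1 \cup F_2$, a subgraph of $H_1 \cup H_2$ with $V(F) = V(H_1 \cup H_2)$ and at most $\rho(H_1)+\rho(H_2)$ edges. The crux is that $F$ induces the \emph{same} partition of $V(H_1 \cup H_2)$ into connected components as $H_1 \cup H_2$ itself. One inclusion is immediate, since $F \subseteq H_1 \cup H_2$ forces each component of $F$ to lie inside a component of $H_1 \cup H_2$. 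For the reverse, if $u$ and $v$ lie in a common component of $H_1 \cup H_2$, take a path between them with edges in $E(H_1) \cup E(H_2)$; each such edge, being an edge of $H_1$ or of $H_2$, has its two endpoints already joined within $F_1$ or within $F_2$, hence within $F$, and chaining these connections shows $u$ and $v$ are connected in $F$. Therefore $\nu(F) = \nu(H_1 \cup H_2)$, and consequently
\[
\rho(H_1 \cup H_2) = |V(F)| - \nu(F) = \rho(F) \leq |E(F)| \leq \rho(H_1) + \rho(H_2),
\]
using that the rank of any graph is at most its number of edges. This yields the required inequality, and hence the lemma.

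The only genuinely delicate point is this component-preservation (``rerouting'') claim for $F_1 \cup F_2$; once it is in place, the remainder is bookkeeping with the identity $\rho = |V| - \nu$ and the monotonicity of $c_n^{-x}$. As an alternative viewpoint I would note that the inequality is exactly a positive-correlation statement: writing $A_H$ for the event that every edge of $H$ is monochromatic under a uniform $c_n$-coloring, one has $\beta(H) = \P(A_H)$ and $A_{H_1 \cup H_2} = A_{H_1} \cap A_{H_2}$, so the claim reads $\P(A_{H_1} \cap A_{H_2}) \geq \P(A_{H_1})\P(A_{H_2})$, a Harris/FKG-type positive association of the monochromaticity events. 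This gives good intuition for why the inequality points the way it does, but the direct rank argument above is cleaner and avoids verifying lattice and monotonicity hypotheses, so I would present that as the proof.
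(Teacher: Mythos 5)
Your proof is correct, and it takes a genuinely different route from the paper's. Both arguments ultimately rest on the same combinatorial fact --- subadditivity of the graph rank $\rho(H)=|V(H)|-\nu(H)$, which is exactly what makes $\beta(H_1\cup H_2)=c_n^{-\rho(H_1\cup H_2)}\ge c_n^{-\rho(H_1)-\rho(H_2)}=\beta(H_1)\beta(H_2)$ --- but they establish it by different mechanisms. The paper works component by component: it splits $F=H_1\cup H_2$ into its connected components, classifies them according to whether they meet only $H_1$, only $H_2$, or both (the sets $I_1,I_2,I_{12}$ in its notation), and for each mixed component $F_s$ builds an auxiliary bipartite graph $B_s$ whose two sides are the connected components of $F_s'=F_s\cap H_1$ and $F_s''=F_s\cap H_2$, with an edge whenever two pieces share a vertex; connectedness of $F_s$ forces $B_s$ to be connected, giving $|V(F_s')\cap V(F_s'')|\ge |E(B_s)|\ge \nu(F_s')+\nu(F_s'')-1$, which is precisely the per-component rank inequality, and multiplying over components finishes. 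You instead prove subadditivity globally: take spanning forests $F_1,F_2$ of $H_1,H_2$, show by the rerouting argument that $F_1\cup F_2$ induces the same partition into connected components as $H_1\cup H_2$, and count edges. Your version is leaner --- no component classification, no auxiliary graph --- and it names the underlying principle explicitly, while the paper's localized count yields in passing a lower bound on the number of shared vertices in each mixed component, extra information that the lemma never uses. Two small remarks: your claimed \emph{equivalence} with rank subadditivity requires $c_n>1$ (for $c_n=1$ both sides of the lemma equal $1$), but the implication you actually use holds for all $c_n\ge 1$, so nothing is lost; and the FKG/Harris analogy is fine as intuition, though, as you yourself note, it is not a direct application --- the monochromaticity events are not monotone in any natural product order on colorings --- so a hands-on argument such as yours or the paper's is genuinely needed.
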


\begin{proof} Denote by $F=H_1\bigcup H_2$, and let $F_1, F_2, \ldots , F_{\nu(F)}$ be the connected components of $F$. Define
\begin{align}\label{eq:I12}
I_{1}=\{s \in [\nu(F)]:  V(F_s)\bigcap V(H_1)\ne \emptyset \text{ and } V(F_s)\bigcap V(H_2)= \emptyset \}, \nonumber \\
I_{2}=\{s \in [\nu(F)]:  V(F_s)\bigcap V(H_1)= \emptyset \text{ and } V(F_s)\bigcap V(H_2)\ne \emptyset \}, \nonumber \\
I_{12}=\{s \in [\nu(F)]:  V(F_s)\bigcap V(H_1)\ne \emptyset \text{ and } V(F_s)\bigcap V(H_2)\ne \emptyset \}. \end{align}
Fix $s \in I_{12}$, that is, $V(F_s)\bigcap V(H_1)\ne \emptyset$ and $V(F_s)\bigcap V(H_2)\ne \emptyset$. Then $F_s=F_{s}'\bigcup F_{s}''$, where $$F_{s}'=(V(F_s)\bigcap V(H_1), E(F_{s}) \bigcap E(H_1)), \quad \text{and} \quad F_{s}''=(V(F_s) \bigcap V(H_2), E(F_s)\bigcap E(H_2)).$$
Let $F_{s1}', F_{s2}' \ldots F_{sa}'$ be the connected components of $F_s'$ and similarly, $F_{s1}'', F_{s2}'' \ldots F_{sb}''$ be the connected components of $F_s''$, where $a=\nu(F_s')$ and $b=\nu(F_s'')$. Construct a bipartite graph $B_s=(B_s'\bigcup B_s'', E(B_s))$, where $B_s'=\{F_{s1}', F_{s2}', \ldots F_{sa}'\}$ and $B_s''=\{F_{s1}'', F_{s2}'', \ldots F_{sb}''\}$ and 
there is any edge between $F_{sx}'$ and $F_{sy}''$ if and only if $V(F_{sx}')\bigcap V(F_{sy}'') \ne \emptyset$, for $x \in [a]$ and $y \in [b]$. 
Note that $|V(F_s')\bigcap V(F_s'')| \geq |E(B_s)|$, and since the graph $F_s$ is connected, the graph $B_s$ is also connected. Therefore, 
$$|V(F_s')\bigcap V(F_s'')| \geq |E(B_s)|\geq |V(B_s)|-1 = \nu(F_s')+\nu(F_s'')-1,$$
This implies,  
\begin{align*}
|V(F_s)|=|V(F_s')|+|V(F_s'')|-|V(F_s')\bigcap V(F_s'')| \le |V(F_s')|-\nu(F_s')+|V(F_s'')|-\nu(F_s'')+1.
\end{align*}
Then, recalling \eqref{eq:I12}, it follows that 
\begin{align*}
\beta(H)=&\prod_{s\in I_1} \beta(F_s) \prod_{s\in I_2} \beta(F_s) \prod_{s \in I_{12}} \beta(F_s)\\
=&  \prod_{s \in I_{12}} \left(\frac{1}{c_n}\right)^{|V(F_s)|-1} \prod_{s\in I_1} \beta(F_s) \prod_{s\in I_2} \beta(F_s) \\
\ge & \left(\prod_{s \in I_{12}} \left(\frac{1}{c_n}\right)^{|V(F_s')|-\nu(F_s')} \prod_{s\in I_1} \beta(F_s) \right)\left( \prod_{s \in I_{12}} \left(\frac{1}{c_n}\right)^{|V(F_s'')|-\nu(F_s'')} \prod_{s\in I_2} \beta(F_s) \right) \\
=&\beta(H_1)\beta(H_2),
\end{align*}
completing the proof of the lemma.
\end{proof}

Now, recall the definitions of the graph $G_{n, \varepsilon}^{-}$ from Section \ref{sec:pfoutline}, 
and note that
\begin{align}\label{eq:G2}
T(K_{1, r}, G_{n, \varepsilon}^{-})&= \sum_{\bm u \in \sS_r(G_{n, \varepsilon}^{-})} \bm 1\{X_{=\bm u}\}, 
\end{align}
where 
\begin{itemize}
\item[--] $\sS_r(G_{n, \varepsilon}^{-})$ is the collection of  ordered $(r+1)$-tuples $\bm u=(u_0, u_1,\cdots,u_r)$, such that $u_0, u_1,\ldots, u_r \in V(G_{n, \varepsilon}^{-})$ are distinct and $(u_0,u_i) \in E(G_{n, \varepsilon}^{-})$, for $i \in [1, r]$; and 

\item[--] $\bm 1\{X_{=\bm u}\}= \bm 1\{X_{u_0}=X_{u_1}=\cdots=X_{u_r}\}$. 
\end{itemize}

For any $u\in V(G_n)$, let $N_{G^{+}_{n,\varepsilon}}(u)$ be the neighborhood of $u$ in $G^{+}_{n,\varepsilon}$. Index the vertices in $N_{G^{+}_{n,\varepsilon}}(u)$ as $\{b_1(v), b_2(v), \ldots b_{d_v^{+}}(v)\}$, where $d_v^{+}$ is the degree of the vertex $v$ in $G^{+}_{n,\varepsilon}$. Let 
$$\Gamma= \prod_{v=1}^{K}N_{G^{+}_{n,\varepsilon}}(v)^{t_v}\times \sS_r(G_{n, \varepsilon}^{-})^s$$ 
denote the collection of vertices $\{b_j(v),1\le j\le t_v,1\le v\le K\}$ and $s$ ordered $(r+1)$-tuples $$\bm u_1=(u_{10},u_{11}, u_{12}, \cdots, u_{1r}), \bm u_2=(u_{20},u_{21}, u_{22}, \cdots, u_{2r}), \ldots, \bm u_s=(u_{s0},u_{s1}, \ldots  u_{sr}),$$ such that $b_j(v)\in N_{G^{+}_{n,\varepsilon}}(v)$, for $j\in [t_v]$ and $v \in [1,K]$, and $\bm u_a \in \sS_r(G_{n, \varepsilon}^{-})$, for $a \in [1, s]$. 

Then expanding the product ${T}(K_{1, 2}, G_{n, \varepsilon}^{-})^s  \prod_{v=1}^K T_{G_{n, \varepsilon}^+}(v)^{t_v}$ over the sum, the LHS of \eqref{eq:moment_ind} can be bounded above by: 
\begin{align}\label{eq:upper1}
& \sum_{\Gamma} \Bigg|\E \left(\prod_{v=1}^K \prod_{j=1}^{t_v} \bm 1\{X_{v}=X_{b_j(v)}\} \E \prod_{a=1}^{s}   \bm 1\{X_{=\bm u_{a}}\} \right) -\prod_{v=1}^K \left(\E\prod_{j=1}^{t_v} \bm 1\{X_{v}=X_{b_j(v)}\}\E \prod_{a=1}^{s}   \bm 1\{X_{=\bm u_{a}}\}\right) \Bigg| \nonumber\\
& = \sum_{\Gamma} \left|\beta\left(H_1\bigcup H_2\right)-\beta(H_1)\beta(H_2)\right|
\end{align}
where $\beta(\cdot)$ is defined in \eqref{eq:beta} and 
\begin{itemize}
\item[--] $H_1$ is the simple labelled subgraph of  $G_{n,\varepsilon}^+$ obtained by the union of the edges $(v, b_j(v))$ for $j \in [1, t_v]$ and $v\in [1,K]$.
\item[--] $H_2$ is the simple labelled subgraph of $G_{n, \varepsilon}^{-}$ obtained by the union of the $r$-stars formed by the collection of $(r+1)$-tuples $\{\bm u_{1},\cdots, \bm u_{s}\}$. More formally, 
$H_2=(V(H_2), E(H_2))$, where  
\begin{align*}
V(H_2)=\bigcup_{j=1}^s \bm u_j \quad \text{and} \quad  E(H_2)=\bigcup_{j=1}^s \left\{(u_{j0}, u_{ja}): 1 \leq a \leq r\right\}.
\end{align*}

\end{itemize}
Note that if $V(H_1)\bigcap V(H_2)=\emptyset$, then $\beta\left(H_1\bigcup H_2\right)=\beta(H_1)\beta(H_2)$, and so without loss of generality we may assume that the sum over $\Gamma$ includes only terms for which $H_1\bigcap H_2\ne \emptyset$.

\begin{defn}\label{defn:rs}
Let $\cH_{m_1,m_2}$ denote the set of all unlabelled graphs $H=(V(H), E(H))$ 
which can be formed by the union of $m_1$ edges and $m_2$ copies of $K_{1,r }$.
\end{defn}

Now, recalling that $\beta\left(H_1\bigcup H_2\right)=\beta(H_1)\beta(H_2)$, if $V(H_1)\bigcap V(H_2)=\emptyset$, and $\beta\left(H_1\bigcup H_2\right)\ge \beta(H_1)\beta(H_2)$ otherwise, the RHS of \eqref{eq:upper1} can be bounded as follows:
\begin{align}\label{eq:upper2}
\sum_{\Gamma} \left|\beta\left(H_1\bigcup H_2\right) -\beta(H_1)\beta(H_2)\right|
& \le  \sum_{\Gamma}\beta\left(H_1 \bigcup H_2\right)  \nonumber\\
& =\sum_{m_1=1}^{s_1}\sum_{m_2=1}^{s_2} \sum_{H \in \mathcal H_{m_1,m_2}} \sum_{\Gamma:  H_1\bigcup H_2 \cong H} \beta\left(H_1\bigcup H_2\right)  \nonumber  \\
& \lesssim  \sum_{m_1=1}^{s_1}\sum_{m_2=1}^{s_2} \sum_{H \in \mathcal H_{m_1,m_2}} \beta(H) N(H, G_{n, \varepsilon}^+[K],G_{n,\varepsilon}^-),
\end{align}
where 
\begin{itemize}
\item[--] $G_{n, \varepsilon}^+[K]$ be the induced sub-graph of $G_{n, \varepsilon}^+$ formed by the vertices labeled $\{1, 2, \ldots, K\}$, that is, the $K$ highest degree vertices in $G_n$; and 
\item[--] $N(H, G_{n, \varepsilon}^+[K], G_{n,\varepsilon}^-)$ is the number of copies of $H=H_1\bigcup H_2$ in $G_{n, \varepsilon}^+[K]\bigcup G_{n,\varepsilon}^-$, such that $H_1$ is formed by the union of $m_1$ edges from $G_{n, \varepsilon}^+[K]$ and $H_2$ is formed by the union of $m_2$ copies of $K_{1,r }$ from $G_{n,\varepsilon}^-$, and $V(H_1)\bigcap V(H_2)\ne \emptyset$. 
\end{itemize}

Now, using $\beta(H)=\frac{1}{c_n^{|V(H)|-\nu(H)}}$ (by Lemma \ref{lm:beta_H}), and since the sum over $m_1,m_2, H$ in \eqref{eq:upper2} are all finite, to prove \eqref{eq:moment_ind} it suffices to show that for every $H\in \cH_{m_1,m_2}$, 
\begin{align}\label{eq:upperbd}
\limsup_{\varepsilon\rightarrow0}\limsup_{n\rightarrow\infty}\frac{N(H,G_{n, \varepsilon}^+[K],G_{n,\varepsilon}^-)}{c_n^{V(H)-\nu(H)}}=0.
\end{align}

To this end, fix  $H\in \cH_{m_1,m_2}$ such that $H=H_1\bigcup H_2$, such that $H_1$ is formed by the union of $m_1$ edges from $G_{n, \varepsilon}^+[K]$ and $H_2$ is formed by the union of $m_2$ copies of $K_{1,r }$ from $G_{n,\varepsilon}^-$, and $V(H_1)\bigcap V(H_2)\ne \emptyset$ (otherwise $N(H, G_{n, \varepsilon}^+[K], G_{n,\varepsilon}^-)=0$). Let $C_1, C_2, \ldots, C_{\nu(H)}$ the connected components of $H$. Fix $1 \leq j \leq \nu(H)$ and consider the following three cases:

\begin{itemize}
\item[--] $V(C_j)$ only intersects $V(H_1)$. Since $G_{n, \varepsilon}^+[K]$ is a bi-partite graph with bi-partition with $|E(G_{n,\varepsilon}^+[K])| \le K \Delta(G_n)  \lesssim_r K c_n$ (using $\Delta(G_n) = O(c_n)$). This gives
$$N(C_j, G_{n, \varepsilon}^+[K])\le |E(G_{n, \varepsilon}^+[K])|^{|V(C_j)|-1}\lesssim_{r, m_1} (K  c_n)^{|V(C_j)|-1}.$$

\item[--] $V(C_j)$ only intersects $V(H_2)$. Then there exists $1 \leq h \le m_2$ such that $H_2$ is spanned by $h$ isomorphic copies of $K_{1, r}$. Thus, using the bounds $N(K_{1, r},G_n)=\Theta(c_n^r)$ and gives the bound
$$N(C_j, G_{n,\varepsilon}^-)\le N(K_{1, r},G_n) \Delta(G_n)^{|V(C_j)|-r+1}\lesssim_{r, m_2} c_n^{|V(C_j)|-1},$$ 
using $\Delta(G_n) = O(c_n)$. 

\item[--] $V(C_j)$ intersects both $V(H_1)$ and $V(H_2)$.  If $C_j$ is such that it intersects both $H_1$ and $H_2$, then there is a vertex $v\in V(H_1)\bigcap V(H_2)$, such that $(u,v)$ is an edge in $G_{n, \varepsilon}^+[K]$, and $(v,w)$ is an edge $G_{n,\varepsilon}^-$. Thus, using the estimate  $\Delta(G_n)=O(c_n)$, 
\begin{align*} 
N(C_j,G_{n, \varepsilon}^+[K],G_{n,\varepsilon}^-)&\lesssim |E(G_{n, \varepsilon}^+[K])| \left(\max_{v\in V(G_{n,\varepsilon}^-)}d_v \right) \Delta(G_n)^{|V(C_j)|-3} \\ 
&\lesssim_{r, m_1, m_2} K\varepsilon c_n^{|V(C_j)|-1}.
\end{align*} 
\end{itemize}

Taking a product over $1 \leq j \leq \nu(H)$ and, since $V(H_1)\bigcap V(H_2)\ne \emptyset$, gives
\begin{align*}
N(H, G_{n, \varepsilon}^+[K],G_{n,\varepsilon}^-) \lesssim_{r, m_1, m_2}& \varepsilon K^{|V(H)|-\nu(H)}c_n^{|V(H)|-\nu(H)},
\end{align*}
which implies \eqref{eq:upperbd}, from which the desired conclusion follows. \qed \\

\subsection{Contribution from $G_{n, \varepsilon}^+$}
\label{sec:pfm2}

In this section we compute the asymptotic distribution of  $T_+(K_{1, r}, G_{n, \varepsilon}^{+})$ (recall \eqref{eq:T1}). This involves showing that the collection $\{T_{G_{n, \varepsilon}^+}(v): 1\le v\le K\}$ are asymptotically independent, by another moment comparison.

\begin{lem}\label{lm:ind2} Assume  \eqref{eq:2starmean} holds, and $\varepsilon>0$ small enough. Then for all non-negative integers $s_1,\cdots,s_K$,  
\begin{align}\label{eq:moment_ind}
\lim_{n\rightarrow\infty}\left|\E\left( \prod_{v=1}^K T_{G_{n, \varepsilon}^+}(v)^{s_v}\right)- \prod_{v=1}^{K} \E T_{G_{n, \varepsilon}^+}(v)^{s_v} \right|=0.
\end{align} 
As a consequence, $T_+(K_{1, r}, G_{n, \varepsilon}^{+}) \dto \sum_{v=1}^\eta {T_v \choose r}$, as $n \rightarrow \infty$, where $T_1, T_2, \ldots, T_\eta$ are independent $\dPois(\theta_1), \dPois(\theta_2), \ldots, \dPois(\theta_\eta)$, respectively. $($Recall that $\eta = \eta(\varepsilon)$ is such that $\theta_{\eta+1}< \varepsilon < \theta_\eta$.$)$
\end{lem}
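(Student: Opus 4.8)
The plan is to prove \eqref{eq:moment_ind} by the same $\beta$-function moment-comparison machinery developed for Lemma \ref{lm:ind}, and then to deduce the distributional limit from the joint moment convergence together with the fact that each $T_{G_{n,\varepsilon}^+}(v)$ is asymptotically Poisson. For the first part, I would expand the product $\prod_{v=1}^K T_{G_{n,\varepsilon}^+}(v)^{s_v}$ over the indexing set $\Gamma = \prod_{v=1}^K N_{G^{+}_{n,\varepsilon}}(v)^{s_v}$, so that each term is $\beta(H)$ for the labelled subgraph $H = \bigcup_{v=1}^K H_v$, where $H_v$ is the union of the $s_v$ chosen edges emanating from the central vertex $v$. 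Since the central vertices $1,\dots,K$ are distinct, the ``cross term'' $\E\prod_v T_{G_{n,\varepsilon}^+}(v)^{s_v}$ minus $\prod_v \E T_{G_{n,\varepsilon}^+}(v)^{s_v}$ is again controlled by $|\beta(H_1\cup\cdots)-\prod\beta(H_v)|\le \beta(\bigcup_v H_v)$ (by the superadditivity in Lemma \ref{lm:beta_H}) summed only over configurations in which two distinct stars $H_u, H_v$ share a \emph{leaf} vertex (they cannot share a center, as the centers are distinct). The key estimate is then that the number of such overlapping configurations, weighted by $\beta$, is $o(1)$; this is the analogue of \eqref{eq:upperbd} and uses the bipartite structure of $G^{+}_{n,\varepsilon}[K]$ and $\Delta(G_n)=\Theta(c_n)$. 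The main obstacle will be bookkeeping the connected-component count $\nu(H)$ correctly: an overlap between two stars merges components and thereby \emph{lowers} $|V(H)|-\nu(H)$ by at least one relative to the disjoint case, so each forced coincidence contributes an extra factor of $c_n^{-1}$, driving the bound to zero.

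Once \eqref{eq:moment_ind} is established, I would argue that it suffices to identify the marginal limit of each $T_{G_{n,\varepsilon}^+}(v)$ for $v\in\{1,\dots,\eta\}$ and invoke the method of moments. The claim is that $T_{G_{n,\varepsilon}^+}(v)\dto \dPois(\theta_v)$, and this should follow because $T_{G_{n,\varepsilon}^+}(v)=\sum_{u} a_{uv}(G_n)\bm 1\{X_u=X_v\}$ is, conditionally on the color $X_v$, a sum of $d_v^{+}$ \emph{independent} indicator variables $\bm 1\{X_u=X_v\}$, each with success probability $1/c_n$. Since $d_v^{+}=(1+o(1))d_v$ (removing edges between $\varepsilon$-big vertices changes $d_v$ by at most $|V_\varepsilon(G_n)|=O_\varepsilon(1)=o(c_n)$ via Lemma \ref{lm:deglarge} and Lemma \ref{lm:Geps}) and $d_v/c_n\to\theta_v$ by assumption \eqref{eq:degassumption}, the classical Poisson limit theorem for sums of independent indicators gives $T_{G_{n,\varepsilon}^+}(v)\dto\dPois(\theta_v)$, with convergence of all moments because the indicators are bounded. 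Here I would note that the conditioning on $X_v$ is immaterial in the limit: the collection $\{\bm 1\{X_u=X_v\}\}_u$ has the same law for each fixed value of $X_v$, so no extra averaging is required.

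Combining these two ingredients yields the joint convergence $\{T_{G_{n,\varepsilon}^+}(v):1\le v\le\eta\}\dto(T_1,\dots,T_\eta)$ with the $T_v$ mutually independent $\dPois(\theta_v)$, in the sense of all joint moments; by the continuous mapping theorem applied to the polynomial $(x_1,\dots,x_\eta)\mapsto\sum_{v=1}^\eta\binom{x_v}{r}$, and using \eqref{eq:T1eps}, this gives
\begin{align*}
T_+(K_{1, r}, G_{n, \varepsilon}^{+})=\sum_{v=1}^\eta \binom{T_{G_{n, \varepsilon}^+}(v)}{r}\dto \sum_{v=1}^\eta \binom{T_v}{r},
\end{align*}
as desired. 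The only subtlety to flag is that convergence in all joint moments implies convergence in distribution provided the limiting moment sequence determines the law uniquely; since each $T_v$ is Poisson (hence determined by its moments) and there are only finitely many of them, the joint moment sequence does determine the joint law, so the method of moments applies cleanly. I expect the bulk of the routine work to be the overlap-counting in \eqref{eq:moment_ind}, which essentially repeats the case analysis of Lemma \ref{lm:ind} but is simpler here because $H_2$ is absent and every connected component lives entirely within $G^{+}_{n,\varepsilon}[K]$.
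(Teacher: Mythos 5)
Your overall architecture (the $\beta$-function moment comparison, then method of moments plus continuous mapping) is sound, but the justification you give for the key estimate is wrong. You claim that when two stars share a leaf, the merge \emph{lowers} $|V(H)|-\nu(H)$ by at least one, so that "each forced coincidence contributes an extra factor of $c_n^{-1}$" through $\beta$. In fact, identifying one leaf across two stars decreases $|V(H)|$ by one \emph{and} decreases $\nu(H)$ by one, so $|V(H)|-\nu(H)$, and hence $\beta(H)=c_n^{-(|V(H)|-\nu(H))}$, is unchanged: for $K=2$, $s_1=s_2=1$, the disjoint configuration has $|V|-\nu=4-2=2$ while the path $1-w-2$ has $3-1=2$. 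Your claim is also internally inconsistent with the superadditivity you invoke: Lemma \ref{lm:beta_H} says $\beta(H_u\cup H_v)\ge \beta(H_u)\beta(H_v)$, so merging can only \emph{increase} $\beta$, never supply a factor $c_n^{-1}$. The true source of the gain is the \emph{count}, not the weight: each shared leaf removes one free choice of leaf, shrinking the number of labelled configurations by a factor of order $\Delta(G_n)=\Theta(c_n)$. The correct accounting is: for a fixed overlap pattern with $L$ distinct leaves, the number of configurations is $\lesssim \Delta(G_n)^L\lesssim c_n^L$, while $\beta(H)=c_n^{-(K+L-\nu(H))}$; since every component of $H$ contains a center, $\nu(H)\le K$, and any overlap forces $\nu(H)\le K-1$, so each pattern contributes $\lesssim c_n^{\nu(H)-K}\le c_n^{-1}\to 0$. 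With this repair your disjoint-versus-overlapping dichotomy does prove \eqref{eq:moment_ind}.

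It is worth noting that the paper uses a finer dichotomy than yours: forest versus contains-a-cycle. If $\bigcup_v H(v)$ is a forest --- which includes overlapping configurations such as two stars sharing a single leaf --- the edge indicators are \emph{exactly} mutually independent and the term cancels identically; only cyclic $H$ survive, and these are bounded via \cite[Lemma 2.3]{BDM}, $N(H_1,G_{n,\varepsilon}^+[K])\lesssim |E(G_{n,\varepsilon}^+[K])|^{|V(H_1)|-g/2}$ with $g\ge 3$, which yields decay $c_n^{1-g/2}$. Your coarser split forces you to estimate the forest-overlap terms, which is precisely where your bookkeeping failed; once repaired, your route is arguably more elementary since it avoids the girth lemma. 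Two smaller points: (i) your parenthetical "they cannot share a center, as the centers are distinct" misses the case that the center of one star is a \emph{leaf} of another; this is excluded not by distinctness of centers but because edges between $\varepsilon$-big vertices were deleted in $G_{n,\varepsilon}$, so no center lies in $N_{G_{n,\varepsilon}^+}(v)$ for another center $v$; (ii) your second and third paragraphs --- the marginal limit $T_{G_{n,\varepsilon}^+}(v)\sim \dBin(d_v^+,1/c_n)\dto\dPois(\theta_v)$ via the exact mutual independence of $\{\bm 1\{X_u=X_v\}\}_u$ and $d_v^+=d_v-O_\varepsilon(1)$, followed by moment determinacy of the Poisson vector and the continuous mapping theorem --- are correct, and in fact supply details that the paper asserts without proof.
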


\begin{proof}
Expanding the moments, we have
\begin{align*}
 \left|\E \prod_{v=1}^K T_{G_{n, \varepsilon}^+}(v)^{s_v} -\prod_{v=1}^K \E T_{G_{n, \varepsilon}^+}(v)^{s_v}\right| & = \sum_{\Gamma}  \left|\E \prod_{v=1}^K \prod_{j=1}^{s_v} \bm 1 \{X_v=X_{b_j(v)}\}-\prod_{v=1}^K \E \prod_{j=1}^{s_v} \bm1\{X_v=X_{b_j(v)}\}\right| \nonumber \\ 
& =\sum_{\Gamma}\left|\beta\left(\bigcup_{v=1}^K H(v)\right)-\prod_{v=1}^K \beta(H(v))\right|
\end{align*}
where 
\begin{itemize} 
\item[--]$\Gamma$ is the collection of all possible choices of $b_j(v)\in N_{G_{n, \varepsilon}}(v)$, for $j\in [s_v]$ and $v\in [K]$; and 
\item[--] $H(v)$ denotes the simple graph formed by union of all the edges $(v, b_j(v))$, for $j \in [s_v]$. Note that $H(v)$ is isomorphic to a star graph, for every $v\in [K]$.
\end{itemize}

If $\bigcup_{v=1}^K H(v)$ is a forest, then the collection of random variables $\{\bm1\{X_v=X_{b_j(v)}, j \in [s_v], v\in [K]\}$ are mutually independent, and so, $\beta(\bigcup_{v=1}^K H(v))=\prod_{v=1}^K \beta(H(v))$. Thus, without loss of generality,  assume that $\bigcup_{v=1}^K H(v)$ is not a forest, that is, it contains a cycle. Then denoting $\cH_m$ to be the set of unlabelled graphs with $m$ vertices and $s:=\sum_{v=1}^K s_v$, using Lemma \ref{lm:beta_H}  gives 
\begin{align}\label{eq:momentdiffT}
\left|\E \prod_{v=1}^K T_{G_{n, \varepsilon}^+}(v)^{s_v}-\prod_{v=1}^K \E T_{G_{n, \varepsilon}^+}(v)^{s_v}\right|\lesssim& \sum_{m=2}^{2s}\sum_{\substack{H\in \cH_m \\ H\text{ contains a cycle }}}\sum_{\Gamma: \bigcup_{v=1}^K H(v)\simeq H} \beta\left(\bigcup_{v=1}^K H(v)\right) \nonumber \\
=&\sum_{m=2}^{2s}\sum_{\substack{H\in \cH_m \\ H\text{ contains a cycle }}} N(H,G_{n, \varepsilon}^+[K]) \beta(H) \nonumber \\
=&\sum_{m=2}^{2s} \sum_{\substack{H\in \cH_m \\ H\text{ contains a cycle }}} \frac{N(H,G_{n, \varepsilon}^+[K])}{c_n^{|V(H)|-\nu(H)}}.
\end{align}

Now, fix $H \in \cH_m$ with connected components $H_1, H_2, \ldots, H_{\nu(H)}$, and assume without loss of generality that $H_1$ contains a cycle of length $g\ge 3$. Invoking \cite[Lemma 2.3]{BDM} gives,  $$N(H_1,G_{n, \varepsilon}^+[K])\lesssim |E(G_{n, \varepsilon}^+[K])|^{|V(H_1)|-g/2}\lesssim (K\Delta(G_n))^{|V(H_1)|-g/2},$$
where the last inequality uses $|E(G_{n, \varepsilon}^+[K]|\le K\Delta(G_n)$.  Also, by \cite[Lemma 2.3]{BDM}, for $j\ge 2$, 
$$N(H_j,G_{n, \varepsilon}^+[K])\lesssim |E(G_{n, \varepsilon}^+[K])|^{|V(H_j)|-1}\le (K\Delta(G_n))^{|V(H_j)|-1}.$$
Taking a product over $j$ and using $\Delta(G_n) = O(c_n)$, gives 
$$N(H,G_{n, \varepsilon}^+[K])\le \prod_{j=1}^{\nu(H)} N(H_j, G_n)\lesssim K^{|V(H)|-\nu(H)|}c_n^{|V(H)|-g/2},$$
which implies $\limsup_{n\rightarrow\infty}\frac{N(H,G_{n, \varepsilon}^+[K])}{c_n^{|V(H)|-\nu(H)}}=0$, as $g\ge 3$. Since the sum in \eqref{eq:momentdiffT} is finite (does not depend on $n,\varepsilon$), the  conclusion in \eqref{eq:moment_ind} follows.

Moreover, since $T_{G_{n, \varepsilon}^+}(v) \rightarrow \dPois(\theta_v)$ in distribution and in moments, \eqref{eq:moment_ind}  implies that 
$$\lim_{n\rightarrow\infty}\left|\E\left( \prod_{v=1}^\eta T_{G_{n, \varepsilon}^+}(v)^{s_v}\right)- \prod_{v=1}^{\eta} \E \dPois(\theta_v)^{s_v} \right|.$$
This implies, as the Poisson distribution is uniquely determined by its moments, $$(T_{G_{n, \varepsilon}^+}(1), T_{G_{n, \varepsilon}^+}(2), \ldots, T_{G_{n, \varepsilon}^+}(\eta))\rightarrow (T_1, T_2, \ldots, T_\eta),$$ as $n \rightarrow \infty$, in distribution and in moments, where $T_1, T_2, \ldots, T_\eta$ are independent $\dPois(\theta_1), \dPois(\theta_2)$, $\ldots, \dPois(\theta_\eta)$, respectively. Finally, recalling \eqref{eq:T1eps} and by the continuous mapping theorem $T_+(K_{1, r}, G_{n, \varepsilon}^{+})=\sum_{v=1}^\eta {T_{G_{n, \varepsilon}^+}(v) \choose r} \rightarrow \sum_{v=1}^\eta {T_v \choose r}$ in distribution and in moments, as $n \rightarrow \infty$. \end{proof}

\subsection{Contribution from $G_{n, \varepsilon}^-$} 
\label{sec:pfm3}

In this section we derive the limiting distribution of $T(K_{1,r},G_{n, \varepsilon}^{-})$, by invoking \cite[Theorem 2.1]{BMM}, which gives conditions under which the number of monochromatic subgraphs (in particular monochromatic stars) converges to a linear combination of Poisson variables.

\begin{lem}\label{lm:triangleapprox} As $n \rightarrow \infty$ followed by $\varepsilon \rightarrow 0$, 
$$T(K_{1,r},G_{n, \varepsilon}^{-}) \rightarrow \sum_{k=1}^{r+1} k Z_k,$$ in distribution and in  moments, where $Z_1, Z_2, \ldots, Z_{r+1}$ are independent  $\dPois(\lambda_1-\frac{1}{r!}\sum_{u=1}^\infty \theta_u^r)$, $\dPois(\lambda_2), \ldots \dPois(\lambda_{r+1})$, respectively.
\end{lem}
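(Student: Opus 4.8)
The plan is to reduce the statement to an application of \cite[Theorem 2.1]{BMM}, which characterizes when the number of monochromatic copies of a fixed subgraph converges to a linear combination of independent Poisson variables. First I would verify the hypotheses of that theorem for the graph sequence $G_{n, \varepsilon}^{-}$ and the star $K_{1,r}$, in the regime $n \rightarrow \infty$ followed by $\varepsilon \rightarrow 0$. The key structural fact is that $G_{n, \varepsilon}^{-}$ has maximum degree $o(c_n)$ after letting $\varepsilon \rightarrow 0$: indeed, all $\varepsilon$-big vertices have been removed, so every vertex $v$ in $G_{n, \varepsilon}^{-}$ satisfies $d_v < \varepsilon c_n$, and hence $\lim_{\varepsilon \to 0}\lim_{n \to \infty} \Delta(G_{n,\varepsilon}^{-})/c_n = 0$. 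This ``no high-degree vertex'' property is precisely what forces the limit to be a \emph{linear} combination of Poissons (as opposed to the nonlinear $\binom{T_v}{r}$ terms), since a Poisson limit with no quadratic-or-higher clustering arises exactly when the degree-based non-linear contributions are absent.

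The second step is to identify the correct Poisson parameters $\lambda_1, \ldots, \lambda_{r+1}$. The framework of \cite[Theorem 2.1]{BMM} expresses the limit as $\sum_{k} k Z_k$, where the index $k$ records the number of copies of $K_{1,r}$ sharing a common color-block, and the parameter of $Z_k$ is governed by the limiting normalized count of the relevant super-graphs $F \supseteq K_{1,r}$ on $r+1$ vertices with $N(K_{1,r}, F) = k$ — exactly the quantity $\sum_{F \in \sC_{r,k}} N_{\mathrm{ind}}(F, G_n)/c_n^r \to \lambda_k$ from assumption \eqref{eq:Finduced}. By Lemma \ref{lm:Geps}, these induced counts are unchanged (up to $o(c_n^r)$) when passing from $G_n$ to $G_{n,\varepsilon}$, and the only $F$-copies lost when restricting to $G_{n,\varepsilon}^{-}$ are those whose central vertex (or relevant high-degree vertex) is $\varepsilon$-big. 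For $k \geq 2$ this loss vanishes, giving parameter $\lambda_k$ for $Z_k$. For $k=1$ (the term counting $r$-stars that are ``isolated'' in the sense $N(K_{1,r},F)=1$), the copies centered at $\varepsilon$-big vertices must be subtracted off: each $\varepsilon$-big vertex $u$ of degree $\sim \theta_u c_n$ contributes $\binom{d_u}{r}/c_n^r \to \theta_u^r/r!$ to the normalized $K_{1,r}$-count, so these precisely account for the correction $\frac{1}{r!}\sum_{u=1}^\infty \theta_u^r$. Hence the parameter of $Z_1$ is $\lambda_1 - \frac{1}{r!}\sum_{u=1}^\infty \theta_u^r$, matching the claim.

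The third step is to confirm that the parameters are legitimate, i.e.\ that $\lambda_1 - \frac{1}{r!}\sum_{u=1}^\infty \theta_u^r \geq 0$, so that $\dPois$ of it is well-defined. This follows because the subtracted sum counts a subset of the $K_{1,r}$-copies already tallied in $\lambda_1$ (the copies centered at big vertices, which are genuine $N(K_{1,r},F)=1$ configurations in the limit), together with the finiteness $\sum_u \theta_u^r < \infty$ established via Fatou's lemma in the proof of Lemma \ref{lm:TS2}. I would also invoke the moment-convergence conclusion of \cite[Theorem 2.1]{BMM}, which upgrades the distributional convergence to convergence in all moments, as required by the statement.

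The main obstacle I anticipate is the bookkeeping in the $k=1$ correction term: one must carefully argue that \emph{exactly} the big-vertex-centered $r$-stars are the copies removed in passing to $G_{n,\varepsilon}^{-}$, and that in the double limit ($n \to \infty$ then $\varepsilon \to 0$) the contribution $\frac{1}{r!}\sum_{u=1}^{\eta(\varepsilon)} \theta_u^r$ converges monotonically to the full sum $\frac{1}{r!}\sum_{u=1}^\infty \theta_u^r$ without double-counting against the $k \geq 2$ terms. This requires a clean accounting of how an induced super-graph $F$ on $r+1$ vertices with a big central vertex is partitioned between the $G_{n,\varepsilon}^{+}$ and $G_{n,\varepsilon}^{-}$ pieces, and showing that the interaction (cross) terms are negligible — which is where the independence lemma (Lemma \ref{lm:ind}) and the degree bound $\Delta(G_{n,\varepsilon}^-) = o(c_n)$ do the essential work.
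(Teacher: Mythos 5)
Your proposal is correct and follows essentially the same route as the paper's proof: reduce to \cite[Theorem 2.1]{BMM} applied to $G_{n,\varepsilon}^-$, use $\Delta(G_{n,\varepsilon}^-)\le \varepsilon c_n$ to kill super-graphs on more than $r+1$ vertices, show the induced counts for $k\ge 2$ pass from $G_n$ to $G_{n,\varepsilon}^-$ unchanged (the paper's Lemma \ref{lm:indF}), and obtain the $k=1$ parameter by subtracting the big-vertex stars. The paper organizes your ``$k=1$ bookkeeping'' obstacle cleanly via the identity $N(K_{1,r},G)=\sum_{k=1}^{r+1}k\sum_{F\in\sC_{r,k}}N_{\mathrm{ind}}(F,G)$ together with the remainder bound from Lemma \ref{lm:TS2} (not Lemma \ref{lm:ind}, which plays no role here), and it gets convergence of moments from uniform integrability via \cite[Theorem 1.2]{BDM} rather than from the cited theorem itself.
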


\subsubsection*{\textbf{Proof of Lemma \ref{lm:triangleapprox}}} We will prove this result by invoking \cite[Theorem 2.1]{BMM}. To begin with, let $F$ be a graph formed by union of two isomorphic copies of $K_{1, r}$, such that $|V(F)| > r+1$. Then $F$ is connected, and
\begin{align*}
N(F, G_{n, \varepsilon}^{-}) \lesssim N(K_{1, r}, G_{n, \varepsilon}^{-}) \cdot \Delta(G_n)^{|V(F)|-r-1} 
& \leq N(K_{1, r}, G_n) \cdot (\varepsilon c_n)^{|V(F)|-r-1} \nonumber \\ 
&=\varepsilon^{|V(F)|-r-1}  c_n^{|V(F)|-1}.
\end{align*}
Therefore, $\frac{1}{ c_n^{|V(F)|-1}} N(F, G_{n, \varepsilon}^{-})=o(1)$,  $n \rightarrow \infty$ followed by $\varepsilon \rightarrow 0$, when  $|V(F)| > r+1$.

It remains to consider super-graphs  $F \supseteq K_{1, r}$ with $|V(F)|=r+1$. Recalling $\sC_{r,k} := \{F \supseteq K_{1, r}: |V(F)|=r+1 \text{ and } N(K_{1, r},F)=k\}$, we have the following lemma.

\begin{lem}\label{lm:indF} For any $F \in \sC_{r, k}$, with $k \in [2, r+1]$, $N_{\mathrm{ind}}(F, G_{n, \varepsilon})=N_{\mathrm{ind}}(F, G_{n, \varepsilon}^-)+o(c_n^r)$, as $n \rightarrow \infty$ followed by $\varepsilon \rightarrow 0$. 
\end{lem}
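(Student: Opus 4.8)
The plan is to first reinterpret the quantity to be estimated. Since $G_{n,\varepsilon}^-$ is the induced subgraph of $G_{n,\varepsilon}$ on the non-big vertices $V(G_n)\setminus V_\varepsilon(G_n)$, an $(r+1)$-subset spans an induced copy of $F$ in $G_{n,\varepsilon}^-$ precisely when it spans one in $G_{n,\varepsilon}$ and contains no $\varepsilon$-big vertex. Hence $N_{\mathrm{ind}}(F,G_{n,\varepsilon})-N_{\mathrm{ind}}(F,G_{n,\varepsilon}^-)$ equals the number of induced copies of $F$ in $G_{n,\varepsilon}$ using at least one vertex of $V_\varepsilon(G_n)$, a nonnegative quantity; I would bound it from above and show it is $o(c_n^r)$.

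The structural input is the hypothesis $k=N(K_{1,r},F)\ge 2$. Because $|V(F)|=r+1$, a copy of $K_{1,r}$ in $F$ is exactly a choice of a vertex of degree $r$ (a vertex adjacent to all the others, which I call \emph{universal}), so $F$ has $k\ge 2$ universal vertices, and these are pairwise adjacent. In $G_{n,\varepsilon}$ the $\varepsilon$-big vertices form an independent set, since all big--big edges were deleted, so in any copy at most one universal vertex can be big; as $k\ge 2$, every copy therefore contains a non-big universal vertex $w$, which is adjacent to all the other $r$ vertices of the copy. Now fix an $\varepsilon$-big vertex $v$ lying in the copy. Then $w$ is a non-big neighbor of $v$, so there are at most $d_v$ choices for $w$, and the remaining $r-1$ vertices of the copy lie in $N_{G_{n,\varepsilon}}(w)$, a set of size at most $d_w<\varepsilon c_n$ because $w$ is non-big, so there are at most $(\varepsilon c_n)^{r-1}$ choices for them. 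Accounting for the bounded number of ways of assigning the roles of $F$, the number of copies through $v$ is $\lesssim_r d_v(\varepsilon c_n)^{r-1}$.

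Summing this bound over the $\varepsilon$-big vertices gives
$$N_{\mathrm{ind}}(F,G_{n,\varepsilon})-N_{\mathrm{ind}}(F,G_{n,\varepsilon}^-)\lesssim_r (\varepsilon c_n)^{r-1}\sum_{v:\,d_v\ge \varepsilon c_n} d_v.$$
Here I expect the main obstacle: the crude bound $\sum_{v:\,d_v\ge\varepsilon c_n}d_v\lesssim (\varepsilon c_n)^{-(r-1)}\sum_v d_v^r=\Theta\big((\varepsilon c_n)^{-(r-1)}c_n^r\big)$ only yields $O(c_n^r)$, with the powers of $\varepsilon$ cancelling, so it is not by itself enough to force vanishing. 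The resolution is the same truncation used in Lemma \ref{lm:TS2}: split the sum according to $\varepsilon c_n\le d_v<M\varepsilon c_n$ and $d_v\ge M\varepsilon c_n$. After dividing by $c_n^r$, the right-hand side is dominated by $\frac{1}{c_n^r}\sum_{\varepsilon c_n\le d_v<M\varepsilon c_n}d_v^r+\frac{1}{M^{r-1}c_n^r}\sum_v d_v^r$, which is exactly the bound appearing in \eqref{eq:TS2_I}. By Observation \ref{obs:edges}, Lemma \ref{lm:deglarge}, the degree limits $d_{(v)}/c_n\to\theta_v$, and $\sum_u\theta_u^r<\infty$, letting $n\to\infty$, then $\varepsilon\to 0$, and finally $M\to\infty$ sends this to $0$, precisely as in the proof of Lemma \ref{lm:TS2}. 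This establishes the claim.
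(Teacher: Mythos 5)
Your proof is correct and follows essentially the same route as the paper's: the same structural observation (two degree-$r$ vertices of $F$ are adjacent, big vertices are independent in $G_{n,\varepsilon}$, hence a non-big universal vertex adjacent to a big one), the same counting bound $(\varepsilon c_n)^{r-1}\sum_{v\in V_\varepsilon(G_n)} d_v$, and the same truncation argument from the proof of Lemma \ref{lm:TS2} via \eqref{eq:TS2_I}. The only difference is that you spell out explicitly the estimate the paper compresses into the citation ``which is $o(c_n^r)$ (from the proof of Lemma \ref{lm:TS2})'', correctly noting along the way that the crude bound alone would give only $O(c_n^r)$.
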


\begin{proof} Let $k \in [2, r+1]$ and suppose $F \in \sC_{r, k}$  is an induced subgraph of $G_{n, \varepsilon}$, such that $V(F)$ is not completely contained in $V(G_{n, \varepsilon}^-)$. Then, since $F$ has at least two vertices of degree $r$ and any two degree $r$ vertices must be neighbors, the vertices of $F$ can be spanned by a $r$-star whose central vertex is in $N_{G_{n, \varepsilon}}(V_\varepsilon(G_n))$. Therefore, the difference $N_{\mathrm{ind}}(F, G_{n, \varepsilon})-N_{\mathrm{ind}}(F, G_{n, \varepsilon}^-)$ is bounded above by (up to constants depending only on $r$) 
\begin{align}\label{eq:smallvertexbound}
\sum_{v \notin V_\varepsilon(G_n)}\sum_{u_1\in V_\varepsilon(G_n)} \sum_{\bm u \in {V(G_n) \choose r-1}}  a_{v u_1}(G_n)a_v(\bm u, G_{n}),
\end{align} 
which is $o(c_n^r)$ (from the proof of Lemma \ref{lm:TS2}). 
\end{proof}

Using the above lemma and $N_{\mathrm{ind}}(F, G_n)=N_{\mathrm{ind}}(F, G_{n, \varepsilon})+o(c_n^r)$ (by Lemma \ref{lm:Geps}), it follows that, for $k \in [2, r+1]$, 
$$\lim_{\varepsilon \rightarrow 0} \lim_{n \rightarrow \infty}  \frac{\sum_{F\in \sC_{r,k}} N_{\mathrm{ind}}(F, G_{n, \varepsilon}^-)}{c_n^{r}} = \lim_{\varepsilon \rightarrow 0} \lim_{n \rightarrow \infty}  \frac{\sum_{F\in \sC_{r,k}} N_{\mathrm{ind}}(F, G_n)}{c_n^{r}} =\lambda_k,$$
where the last equality uses \eqref{eq:Finduced}. 

It remains to consider the case $k=1$. To begin with, observe that for any graph $G$, 
\begin{align}\label{eq:NHsum}
N(K_{1, r}, G)=\sum_{k=1}^{r+1} \sum_{F \in \sC_{r, k}}  k N_{\mathrm{ind}}(F, G).
\end{align}	
Moreover, using Lemma \ref{lm:Geps} and \eqref{eq:m2starreduction} gives
\begin{align*}
N(K_{1, r}, G_{n, \varepsilon}^-)
&=N(K_{1, r}, G_n)-\sum_{v=1}^\eta {d_v \choose r}+o(c_n^r).
\end{align*}
Now, using this and \eqref{eq:NHsum} with $G=G_{n, \varepsilon}^-$ gives 
\begin{align*} 
\frac{\sum_{F\in \sC_{r,1}} N_{\mathrm{ind}}(F, G_{n, \varepsilon}^-)}{c_n^{r}} & = \frac{N(K_{1, r}, G_n)}{c_n^r} - \frac{1}{c_n^r} \sum_{v=1}^\eta {d_v \choose r} - \sum_{k=2}^{r+1} k \sum_{F \in \sC_{r, k}}  \frac{N_{\mathrm{ind}}(F, G_{n, \varepsilon}^-)}{c_n^r} + o(1) \\
& \rightarrow  \sum_{k=1}^{r+1} k \lambda_k  -\sum_{u=1}^\infty \frac{\theta_u^r}{r!}-  \sum_{k=2}^{r+1} k \lambda_k  \tag*{(using \eqref{eq:NHsum} with $G=G_n$ and \eqref{eq:Finduced})}  \\
&= \lambda_1 -\sum_{u=1}^\infty \frac{\theta_u^r}{r!}, 
\end{align*}
as $n \rightarrow \infty$ followed by $\varepsilon \rightarrow 0$. Then by \cite[Theorem 2.1]{BMM}, we have $T(K_{1,r},G_{n, \varepsilon}^{-}) \dto \sum_{k=1}^{r+1} k Z_k$, where $Z_1, Z_2, \ldots, Z_{r+1}$ are as in the statement of the lemma.

The convergence in moments is a consequence of uniform integrability as $\E(T(K_{1, r}, G_{n, \varepsilon}^-)) \leq \E T(K_{1, r}, G_n)^r=O_r(1)$ for every fixed integer $r \geq 1$ \cite[Theorem 1.2]{BDM}.

\subsection{Completing the Proof of Theorem \ref{th:2star}}
\label{sec:pf2star}

To begin use Lemma \ref{lm:TS2} to note that it suffices to find the limiting distribution of 
$$\sum_{v=1}^\eta{T_{G_{n,\varepsilon}^+}(v)\choose r}+T(K_{1,r},G_{n,\varepsilon}^-),$$
under the double limit as $n\rightarrow\infty$ followed by $\varepsilon\rightarrow 0$. Fix an integer $M\ge 1$ and write the above random variable as
$$\sum_{v=1}^M{T_{G_{n,\varepsilon}^+}(v)\choose r}+\sum_{v=M+1}^\eta{T_{G_{n,\varepsilon}^+}(v)\choose r}+T(K_{1,r},G_{n,\varepsilon}^-).$$
Under the double limit the random vector 
$$\Big(T_{G_{n,\varepsilon}^+}(1),\cdots, T_{G_{n,\varepsilon}^+}(M), T(K_{1,r},G_{n,\varepsilon}^-)\Big)\dto \left(T_1,\cdots,T_M,\sum_{k=1}^{r+1}kZ_k\right),$$
 by invoking Lemmas \ref{lm:ind}, \ref{lm:ind2} and \ref{lm:triangleapprox}. 
By continuous mapping theorem this gives
$$\sum_{v=1}^M{T_{G_{n,\varepsilon}^+}(v)\choose r}+T(K_{1,r},G_{n,\varepsilon}^-)\stackrel{D}{\rightarrow} \sum_{v=1}^M{T_v\choose r}+\sum_{k=1}^{r+1}kZ_k,$$
the RHS of which on letting $p\rightarrow\infty$ converges in distribution to $\sum_{v=1}^\infty {T_v\choose r}+\sum_{k=1}^{r+1}kZ_k$. It thus suffices to show that
$$\lim_{M\rightarrow\infty}\lim_{\varepsilon\rightarrow0}\lim_{n\rightarrow\infty} \sum_{v=M+1}^{\eta}\E{T_{G_{n,\varepsilon}^+}(v)\choose r}=0.$$
The LHS above is bounded above by $ \sum_{v=M+1}^{\eta} \frac{1}{r!}\frac{d_v^r}{c_n^r}$, which on letting $n\rightarrow\infty$ followed by $\varepsilon\rightarrow0$ gives $\frac{1}{r!}\sum_{v=M+1}^\infty \theta_v^r$.  This converges to $0$ as $M\rightarrow\infty$, as $\sum_{v=1}^\infty \theta_v^r<\infty$, as noted in the proof of Lemma \ref{lm:TS2}. (Note that if $\lim_{\varepsilon\rightarrow 0}\eta(\varepsilon):=L<\infty$, then the term $\sum_{v=L+1}^{\eta}{T_{G_{n,\varepsilon}^+}(v)\choose r}+T(K_{1,r},G_{n,\varepsilon}^-)$ vanishes, thus simplifying the proof. )

Finally, the convergence in moments is a consequence of uniform integrability as all moments of $T(K_{1, r}, G_n)$ are bounded: that is, $\E T(K_{1, r}, G_n)^r=O_r(1)$ for every fixed integer $r \geq 1$ (this follows from the proof of \cite[Theorem 1.2]{BDM}).

To prove the converse, invoking Proposition \ref{ppn:bmm} we can assume, without loss of generality, that $N(K_{1,r},G_n)=O(c_n^r)$. This in turn implies that for every graph $F$ on $r+1$ vertices which is a super graph of $K_{1,r}$ we have $N_{\text{ind}}(F,G_n)=O(c_n^r)$.  Thus by passing to a subsequence, assume that $N_{\text{ind}}(F,G_n)/c_n^r$ converges for every $F$ which is a super graph of $K_{1,r}$. This implies existence of the limits in \eqref{eq:Finduced}. Finally, using \eqref{eq:edges} we have $\max_{v\in V(G_n)}d_v=O(c_n)$, and so the infinite tuple $\{d_v/c_n\}_{v\ge 1}$ is an element of $[0,K]^\N$ for some $K$ fixed. Since $[0,K]^\N$ is compact in product topology, there is a further subsequence along which $d_v/c_n$ converges for every $v\ge 1$ simultaneously. Thus, moving to a subsequence, we can assume that $d_v/c_n$ converges to $\theta_v$ for every $v$. Invoking the sufficiency part of the theorem gives that $T(K_{1,r},G_n)$ converges in distribution to a random variable of the desired form, completing the proof.

\subsection{Proof of Corollary \ref{cor:2star}}
\label{sec:cor2starpf}

The proof of $(a)\Rightarrow (b)$ is immediate from Theorem \ref{th:2star}, so it suffices to prove $(b)\Rightarrow (a)$. To this end,  note that $T(K_{1,r},G_n) \dto \sum_{k=1}^{r+1} k Z_k$ implies that \eqref{eq:2starmean} holds (Proposition \ref{ppn:bmm}). Thus, by a similar argument which was used to prove the converse of Theorem \ref{th:2star}, it follows that along a subsequence the limits $\lim_{n\rightarrow\infty}\frac{1}{c_n^r} N_{\text{ind}}(F,G_n)$ exist for all super graphs $F$ of $K_{1,r}$ on $r+1$ vertices, and so, for $k \in [1, r+1]$, 
$$\lambda'_k:=\lim_{n\rightarrow\infty}\sum_{F\in \cC_{r,k}}\frac{N_{\text{ind}}(F,G_n)}{c_n^r} $$ is well defined. Then, as before, by passing to another subsequence the limits $\theta'_v:=\lim_{n\rightarrow\infty}\frac{d_v}{c_n}$ exist for every $v\ge 1$, and by the if part of Theorem \ref{th:2star} along this subsequence, 
$$T(K_{1,r},G_n)\stackrel{d}{\rightarrow}\sum_{v=1}^\infty{T_v'\choose r}+\sum_{k=1}^{r+1}k Z_k',$$
where $\{T_v'\}_{v\ge 1}$ and $\{Z_k'\}_{1\le k\le r+1}$ are mutually independent, and $T_1', T_2', \ldots, $ are independent $\dPois(\theta_1')$, $\dPois(\theta_2'), \ldots $, respectively, and $Z_1', Z_2', \ldots, Z_{r+1}'$ are independent  $\dPois(\lambda_1'-\frac{1}{r!}\sum_{u=1}^\infty (\theta_u')^r), \dPois(\lambda_2')$, $\ldots, \dPois(\lambda_{r+1}')$, respectively.

However, since $T(K_{1,r},G_n)$ converges in distribution to $\sum_{k=1}^{r+1}kZ_k$ which has finite exponential moment everywhere, it follows that $\theta_v'=0$ for all $v\ge 1$, and consequently, the maximum degree $\Delta(G_n)=o(c_n)$. This also gives $$\sum_{k=1}^{r+1}kZ_k\stackrel{D}{=}\sum_{k=1}^{r+1}kZ'_k,$$ 
and so the corresponding probability generating functions must match, that is, 
$$\prod_{k=1}^{r+1}e^{\lambda_k(s^k-1)}=\prod_{k=1}^{r+1}e^{\lambda_k'(s^k-1)}, \quad \text{for all } s\in (0, 1).$$
This implies, $\sum_{k=1}^{r+1}\lambda_k(s^k-1)=\sum_{k=1}^{r+1}\lambda_k'(s^k-1)$, for all $s\in (0,1)$, and so the corresponding coefficients must be equal, giving $\lambda_k=\lambda_k'$. Therefore, every sub sequential limit of $\sum_{F\in \cC_{r,k}}\frac{N_{\text{ind}}(K_{1,r},G_n)}{c_n^r}$ equal $\lambda_k$,  for $k \in [1, r+1]$, hence, \eqref{eq:Finduced} holds.

\section{Examples}
\label{sec:2starexample}

In this section we apply Theorem \ref{th:2star} to different deterministic and random graph models, and
determine the specific nature of the limiting distribution.

\begin{example}(Disjoint Union of Stars) The proof of Theorem \ref{th:2star} shows that the quadratic term in the limiting distribution of $T(K_{1,r}, G_n)$ appears due to the $r$-stars incident on vertices with degree $\Theta(c_n)$. This can be seen when $G_n$ is a disjoint union of star graphs.

\begin{itemize}
\item

To begin with suppose $G_n=K_{1, n}$ is the $n$-star. Then $N(K_{1, r}, K_{1, n})={n \choose r}$, and if  we color $K_{1, n}$ with $c_n$ colors such that $n/c_n \rightarrow 1$, then $\E(T(K_{1, r}, G_n))=\frac{1}{r!}$. Note that the maximum degree $d_{(1)}=n$, which implies $\theta_1=1$. Moreover, $d_{(2)}=1$, which implies $\theta_v=0$, for all $v \geq 2$. Therefore, by Theorem \ref{th:2star}, 
$$T(K_{1, r}, G_n)  \dto {T_1\choose r},$$
where $T_1\sim \dPois(1)$. (Note that the graph $G_{n, \varepsilon}^{-}$ is empty in this case.)

\item
Next, consider $G_n$ to be the disjoint union of the following stars: $K_{1, \lfloor na_1 \rfloor}, K_{1, \lfloor na_2\rfloor}, \ldots, K_{1, \lfloor na_n \rfloor}$, such that $\sum_{s=1}^\infty a_s^r  < \infty$. In this case, $N(K_{1, r}, G_n)=\sum_{s=1}^n{\lfloor na_s \rfloor \choose r}\sim \frac{n^r}{r!}\sum_{s=1}^na_s^r$. If  $G_n$ is colored with $c_n$ colors such that $n/c_n\rightarrow 1$, then $\E(T(K_{1, r}, G_n))\rightarrow \frac{1}{r!}\sum_{s=1}^\infty a_s^r$.  Also, $d_{(v)}=\lfloor na_v \rfloor$, which implies $\theta_v= a_v,$ for $v \geq 1$. This implies, by Theorem \ref{th:2star},  $$T(K_{1, r}, G_n) \dto \sum_{s=1}^\infty {T_s\choose r},$$ where $T_s\sim \dPois(a_s)$ and $T_1, T_2, \ldots$ are independent. Here, the linear terms linear in Poisson do not contribute, as $G_{n, \varepsilon}^-$ is empty, and $\E T(K_{1, r},G_n)\sim \frac{1}{r!}\sum_{v=1}^\infty \theta_v^r$.

\item Finally, consider $G_n$ to be the disjoint union of the following stars: $$K_{1, \lfloor na_1+n^{\frac{r-1}{r}} \rfloor}, K_{1, \lfloor na_2+n^{\frac{r-1}{r}} \rfloor}, \ldots, K_{1, \lfloor na_n+n^{\frac{r-1}{r}} \rfloor}.$$ In this case, $$N(K_{1, r}, G_n)=\sum_{s=1}^n{\lfloor na_s+n^{\frac{r-1}{r}}\rfloor \choose r}\sim \frac{n^r}{r!}+\frac{n^r}{r!}\sum_{s=1}^na_s^r,$$
since $\sum_{s=1}^n a_s^k =o(n^{1-\frac{k}{r}})$, for $1 \leq k < r$ (see Observation \ref{obs:serieskr} below).
If  $G_n$ is colored with $c_n$ colors such that $n/c_n\rightarrow1$, then $\E(T(K_{1, r}, G_n))\rightarrow \frac{1}{r!}\left(1+\sum_{s=1}^\infty a_s^r \right)$.  Also, $d_{(v)}=\lfloor na_v  + n^{\frac{r-1}{r}}  \rfloor$, which implies $\theta_v=a_v,$ for $v \geq 1$, and so
 Theorem \ref{th:2star} gives $$T(K_{1, r}, G_n) \dto \sum_{s=1}^\infty {T_s\choose r}+Z,$$ where $T_s\sim \dPois( a_s)$ and $T_1, T_2, \ldots$ are independent, and $Z\sim \dPois(\frac{1}{r!})$ independent of $\{T_s\}_{s\ge 1}$.

\end{itemize}
\end{example}

\begin{obs}\label{obs:serieskr} If $\{a_s\}_{s\geq 1}$ is a sequence of non-negative real numbers such that $\sum_{s=1}^\infty a_s^r < \infty$ then $\sum_{s=1}^n a_s^k =o(n^{1-\frac{k}{r}})$, for $1 \leq k < r$. 
\end{obs}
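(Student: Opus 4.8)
The plan is to prove Observation \ref{obs:serieskr} using a truncation argument that separates the ``large'' terms of the sequence from the ``small'' ones, exploiting the summability of $\sum_{s} a_s^r$ and the fact that $k < r$. Since $\sum_{s=1}^\infty a_s^r < \infty$, the tail of this series is negligible, and I will use this to control the sum $\sum_{s=1}^n a_s^k$ by a combination of a H\"older-type estimate and a crude bound on the number of terms.

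First I would note that since $\sum_{s=1}^\infty a_s^r < \infty$, for any $\delta > 0$ there exists $S_0 = S_0(\delta)$ such that $\sum_{s > S_0} a_s^r < \delta$. I split the sum as
\begin{align*}
\sum_{s=1}^n a_s^k = \sum_{s=1}^{S_0} a_s^k + \sum_{s=S_0+1}^n a_s^k.
\end{align*}
The first (finite) piece is a constant $C(\delta)$ independent of $n$, which is trivially $o(n^{1-k/r})$ since the exponent $1 - k/r > 0$. For the second piece, the key tool is H\"older's inequality with exponents $p = r/k$ and $q = r/(r-k)$ (which are conjugate since $k < r$): applying it to $\sum_{s=S_0+1}^n a_s^k \cdot 1$ yields
\begin{align*}
\sum_{s=S_0+1}^n a_s^k \le \left( \sum_{s=S_0+1}^n a_s^r \right)^{k/r} \left( \sum_{s=S_0+1}^n 1 \right)^{(r-k)/r} \le \delta^{k/r} \, n^{1-k/r}.
\end{align*}

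Combining the two estimates, I obtain $\sum_{s=1}^n a_s^k \le C(\delta) + \delta^{k/r} n^{1-k/r}$. Dividing through by $n^{1-k/r}$ and letting $n \to \infty$ gives $\limsup_{n \to \infty} n^{-(1-k/r)} \sum_{s=1}^n a_s^k \le \delta^{k/r}$, and since $\delta > 0$ was arbitrary, the limsup is $0$, which is precisely the claim $\sum_{s=1}^n a_s^k = o(n^{1-k/r})$. The main (and only real) subtlety is ensuring the H\"older exponents are correctly conjugate and that the truncation is done \emph{before} applying H\"older, so that the $\delta$-smallness of the $a_s^r$-tail is genuinely inherited by the bound; applying H\"older to the full sum without truncation would only give the non-quantitative bound $\sum_{s=1}^n a_s^k \lesssim n^{1-k/r}$, which is too weak to conclude $o(\cdot)$.
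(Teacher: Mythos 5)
Your proof is correct. It shares the same overall skeleton as the paper's argument: fix a truncation point beyond which the $\ell^r$ tail mass is at most $\delta$, note that the initial finite block is $O(1)$ and hence negligible against $n^{1-k/r}$, and then show the tail contributes at most a small constant times $n^{1-k/r}$. Where you differ is in the key tool for the tail estimate. You invoke H\"older's inequality with conjugate exponents $r/k$ and $r/(r-k)$, which yields $\sum_{s>S_0} a_s^k \le \delta^{k/r} n^{1-k/r}$ in a single line, so that only one parameter $\delta$ and one limit are needed. The paper instead performs an elementary case split on the tail at the scale $a_s \le \varepsilon n^{-1/r}$: the small terms are bounded crudely by $\varepsilon^k n^{-k/r}$ each (at most $n$ of them, giving $\varepsilon^k n^{1-k/r}$), while for the large terms one writes $a_s^k \le a_s^r/(\varepsilon n^{-1/r})^{r-k}$, giving $\varepsilon^{-(r-k)} n^{1-k/r}\sum_{s>N} a_s^r$; the conclusion then follows from a double limit, $N\to\infty$ followed by $\varepsilon\to 0$. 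The two tail estimates are really the same interpolation bound: the paper's threshold argument is a hand-rolled, fully elementary proof of exactly what H\"older gives you. What your route buys is brevity and a cleaner limiting argument (one parameter instead of two); what the paper's route buys is self-containedness, avoiding any appeal to H\"older. Your closing remark is also on target: applying H\"older to the whole sum $\sum_{s=1}^n a_s^k$ without truncating first would only give $O(n^{1-k/r})$ with constant $\bigl(\sum_{s=1}^\infty a_s^r\bigr)^{k/r}$, not $o(n^{1-k/r})$; the paper's proof relies on the analogous point, since its large-term bound carries the factor $\sum_{s=N+1}^\infty a_s^r$ precisely because the first $N$ terms were peeled off beforehand.
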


\begin{proof} Fixing $\varepsilon>0$ and a positive integer $N\ge 1$ we get
\begin{align*}
\sum_{s=1}^n a_s^k=&\sum_{s=1}^Na_s^k+\sum_{s=N+1}^n a_s^k \bm 1\{a_s\le \varepsilon n^{-\frac{1}{r}}\}+\sum_{s=N+1}^n a_s^k \bm 1\{a_s>\varepsilon n^{-\frac{1}{r}}\}\\
\le &\sum_{s=1}^Na_s^k+\varepsilon^k n^{1-\frac{k}{r}}+\frac{n^{1-\frac{r}{k}}}{\varepsilon^{r-k}}\sum_{s=N+1}^\infty  a_s^r.
\end{align*}
On dividing by $n^{1-\frac{k}{r}}$ and letting $n\rightarrow\infty$, the first term goes to $0$ as it is a finite sum, and, therefore, 
$$\limsup_{n\rightarrow\infty}\frac{\sum_{s=1}^ka_s^r}{n^{1-\frac{k}{r}}}\le \varepsilon^k+\frac{1}{\varepsilon^{r-k}}\sum_{s=N+1}^\infty a_s^r. $$
The desired conclusion now follows on letting $N\rightarrow\infty$ followed by $\varepsilon\rightarrow 0$, on noting that $\sum_{s=1}^\infty a_s^r<\infty$. 
\end{proof}

Next, we see examples where there are no vertices of high degree, in which case, the quadratic term vanishes (Corollary \ref{cor:2star}).

\begin{example}(Regular Graphs) Let $G_n $ be a $d$-regular graph. In this case, $N(K_{1, r}, G_n)=n {d \choose r}$. Consider uniformly coloring the graph with $c_n$ colors such that $\frac{1}{c_n^r} n {d \choose r} \rightarrow \lambda$. In this case,  $\Delta(G_n)=\max_{v\in V(G_n)} d_v=d=o(c_n)$. Therefore, by Corollary \ref{cor:2star}, $T(K_{1, r}, G_n) \dto \sum_{k=1}^{r+1} k Z_k$, where $Z_1, Z_2, \ldots, Z_{r+1}$ are independent $\dPois(\lambda_1), \dPois(\lambda_2), \ldots, \dPois(\lambda_{r+1})$ (recall \eqref{eq:Finduced}). (Note that $\sum_{k=1}^{r+1} k\lambda_k=\lambda$.) The limit simplifies in special cases: 

\begin{itemize}

\item[--] $G_n=K_{n, n}$, the regular bipartite graph. Since, bipartite graphs are triangle-free, $N_{\mathrm{ind}}(F, G_n)=0$, for any super-graph $F$ of $K_{1, r}$ with $|V(F)|=r+1$. This implies $\lambda_k=0$, for $2 \leq k \leq r+1$, and $\lambda_1=\lambda$, and $T(K_{1, r}, K_{n, n}) \dto \dPois(\lambda)$.

\item[--] $G_n=K_n$, the complete graph on $n$ vertices. In this case, any induced graph on $r+1$ vertices is isomorphic to $K_{r+1}$. This implies $\lambda_k=0$, for $1 \leq k \leq r$ and $\lambda_{r+1}=\frac{\lambda}{r+1}$, and $T(K_{1, r}, K_n) \dto (r+1) Z_{r+1}$, where $Z_{r+1}\sim \dPois(\frac{\lambda}{r+1})$.
\end{itemize}

\end{example}

Note that in all the above examples, the limiting distribution either involves only the quadratic part or only the linear part. It is easy to construct examples where both the components show up by taking disjoint unions (or connecting them with a few edges) of the graphs in the above examples, as shown below:

\begin{figure*}[h]
\centering
\begin{minipage}[c]{1.0\textwidth}
\centering
\includegraphics[width=3.85in]
    {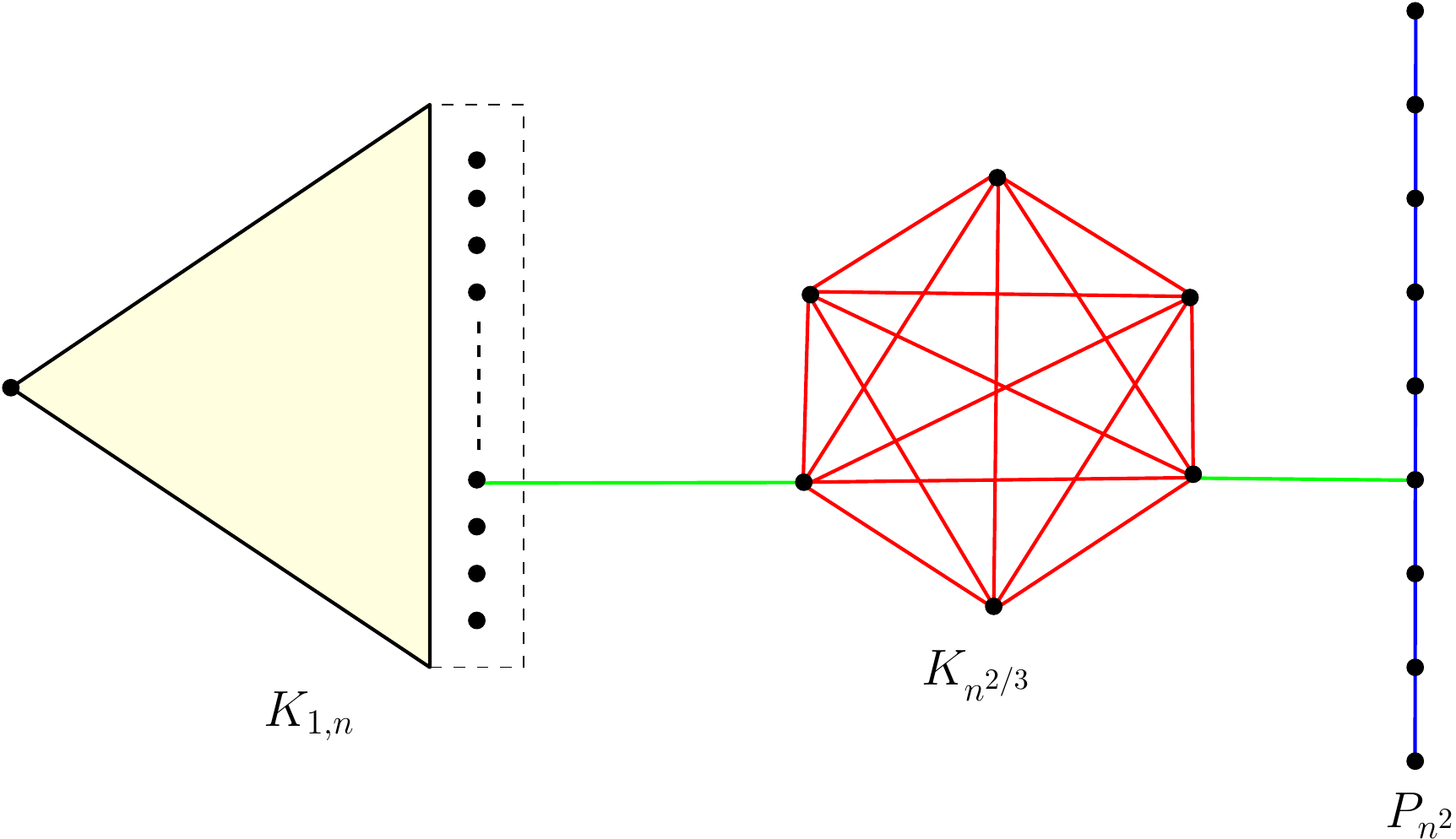}\\
\end{minipage}
\caption{\small{Illustration for Example \ref{ex:graph_disjoint_union}.}}
\vspace{-0.1in}
\label{fig:Graph_Disjoint_Union}
\end{figure*}

\begin{example}\label{ex:graph_disjoint_union} 
Let $G_n$ be the graph in Figure \ref{fig:Graph_Disjoint_Union}. Note that it has three parts, a $K_{1, n}$, where one of the leaves is connected by a single edge to a $K_{n^{2/3}}$, which is connected by a single edge to a path $P_{n^2}$. Consider coloring this graph by $c_n$ colors such that $c_n/n\rightarrow \kappa$. This implies 
$$\E(T(K_{1, 2}, G_n))=\frac{1}{c_n^2}N(K_{1, 2}, G_n)\sim \frac{{ n \choose 2}+3{\ceil{n^{2/3}}\choose 3}+n^2}{c_n^2}\rightarrow 2\kappa^2.$$
Next, note that $\Delta(G_n)=n$, which corresponds to the central vertex of the $K_{1, n}$. Therefore, $\theta_1=\kappa$. For every other vertex the degree is $o(n)$, which implies $\theta_v=0$, for all $v \geq 2$. Finally, since $N(K_3, G_n)={\ceil{n^{2/3}}\choose 3}$, $\nu:=\lim_{n\rightarrow \infty}\frac{1}{c_n^2}N(K_3, G_n)=\frac{\kappa^2}{6}$. Therefore, by Theorem \ref{th:2star}
$$T(K_{1, 2}, G_n) \dto {T_1\choose 2}+3Z_3+Z_1,$$ 
where $T_1\sim \dPois(\kappa)$, $Z_3 \sim \dPois(\frac{\kappa^2}{6})$, and $Z_1 \sim \dPois(\frac{\kappa^2}{2})$. 
\end{example}

\begin{remark}(Extension to random graphs) By a simple conditioning argument, Theorem \ref{th:2star} can be extended to random graphs by conditioning on the graph, under the assumption that the graph and its coloring are jointly independent (see \cite[Lemma 4.1]{BMM}). In this case, whenever the limits in \eqref{eq:2starmean} and \eqref{eq:degassumption} exist in probability, the limit \eqref{eq:2star} holds.  For example, when $G_n\sim G(n, p(n))$ is the Erd\H os-R\'enyi random graph, then the limiting distribution of $T(K_{1, r}, G_n)$ (when $c_n$ is chosen such that $\frac{1}{c_n^r}\E(N(K_{1, r}, G_n)) \rightarrow \lambda$) can be easily derived using Theorem \ref{th:2star}. In this case, depending on whether (a) $n^{\frac{r+1}{r}}p(n)\rightarrow O(1)$, (b) $p(n)\rightarrow 0, n^{\frac{r+1}{r}}p(n)\rightarrow\infty$, or (c) $p(n)=p\in (0, 1)$ is fixed, $T(K_{1, r}, G_n)$ converges to (a) zero in probability, or (b) $\dPois(\lambda)$, or (c) a linear combination of independent Poisson variables (see \cite[Theorem 1.3]{BMM} for details).
\end{remark}

\section{Conclusion and Open Problems}
\label{sec:conclusion}

This paper studies the limiting distribution of the number of monochromatic $r$-stars in a uniformly random coloring of a growing graph sequence. We provide a complete characterization of the limiting distribution of $T(K_{1,r}, G_n)$, in the regime where $\E(T(K_{1,r}, G_n))=\Theta(1)$.

It remains open to understand the limiting distribution of $T(K_{1,r}, G_n)$ when $\E(T(K_{1,r}, G_n))=\frac{1}{c_n^2} N(K_{1,r}, G_n)$ grows to infinity. For the case of monochromatic edges, \cite[Theorem 1.2]{BDM} showed that $T(K_2, G_n)$ (centered by the mean and scaled by the standard deviation) converges to $N(0, 1)$, whenever $\E(T(K_2, G_n))=\frac{1}{c_n} |E(G_n)| \rightarrow \infty$ such that $c_n \rightarrow \infty$. Error rates for the above CLT were obtained by Fang \cite{xiao}. 
It is natural to wonder whether this universality phenomenon extends to monochromatic $r$-stars, and more generally, to any fixed connected graph $H$.

On the other hand, when $\E(T(K_2, G_n)) \rightarrow \infty$ such that the number of colors $c_n=c$ is fixed, then $T(K_2, G_n)$ (after appropriate centering and scaling) is asymptotically normal if and only if its fourth moment converges to 3 \cite[Theorem 1.3]{BDM}. It would be interesting to explore whether this fourth-moment phenomenon extends to monochromatic $r$-stars.  
\\

\small{\noindent{\bf Acknowledgement:} The authors are indebted to Somabha Mukherjee for his careful comments on an earlier version of the manuscript, and Swastik Kopparty for helpful discussions.}

\end{document}